\newtheorem{theorem}{Theorem}
\newtheorem{remark}{Remark}
\newtheorem{corollary}{Corollary}
\newtheorem{definition}{Definition}
\newtheorem{lemma}{Lemma}
\begin{document}
\title{Asymptotic Error Analysis of Gauss Quadrature for \\ Nonsmooth Functions\thanks{P.L. was partially supported by NSF grant DMS-2318053.}}

\author{Pei Liu\thanks{Department of Mathematics and Systems Engineering, Florida Institute of Technology, Melbourne, FL, 32901.(pliu@fit.edu)}
}

\date{}
\maketitle

\begin{abstract}
We derive an asymptotic error formula for Gauss--Legendre quadrature applied to functions with limited regularity, using the contour-integral representation of the remainder term. To address the absence of uniformly valid approximations of Legendre functions near $[-1,1]$, we approximate the integrand by smoother functions with singularities displaced from the interval and then obtain asymptotic expansions of the Legendre functions that hold uniformly along the contour. The resulting error formula identifies not only the optimal convergence rate but also the leading coefficient, expressed in terms of $\cos((2n+1)\phi)$, where $n$ is the number of quadrature points and $\cos(\phi)$ locates the singularity. This characterization enables both the selection of quadrature sizes that minimize the leading error and the use of the error formula as a correction term to accelerate convergence. Applications to functions with power and logarithmic singularities are presented, and numerical experiments confirm the accuracy of the analysis.
\end{abstract}

\section{Introduction}\label{sec:introduction}

Gaussian quadrature rule plays a central role in numerical integration, particularly due to its optimality in integrating polynomials and its rapid convergence. The $n$-point Gauss--Legendre quadrature is given by,
\begin{equation}
    \int_{-1}^1 f(x) dx \approx \sum_{j=1}^n \omega_j f(x_j),
\end{equation}
which is exact for all polynomials of degree up to $2n-1$. Here $x_j$ denotes the $j-$th root of Legendre polynomial $P_n(x)$, and the associated weight $\omega_j = \int_{-1}^1 \frac{P_n(x)}{(x - x_j) P_n'(x_j)} dx =  \frac{2}{(1-x_j^2)[P'_n(x_j)]^2}$. For a function $f(z)$ analytic in a Bernstein ellipse in the complex plane with foci at $\pm 1$, the quadrature remainder admits the contour integral representation,
\begin{equation}
R_n[f] \triangleq \int_{-1}^1 f(x) dx - \sum_{j=1}^n w_j f(x_j) = \frac{1}{\pi i} \int_C f(s) \frac{Q_n(s)}{P_n(s)} ds. \label{Gauss_Error_Contour}
\end{equation}
The function $Q_n(s) = \frac{1}{2} \int_{-1}^1 \frac{P_n(x)}{s - x} dx$ is the Legendre function of the second kind. The contour $C$ is inside the Bernstein ellipse and encloses the real interval $[-1,1]$. Analogous representations hold for other Gaussian quadratures with weights (Jacobi, Chebyshev, Laguerre, Hermite).

For analytical integrand, or integrand with singularity (such as pole and branch points) away from the real interval $[-1,1]$, classical asymptotic analysis of contour integration have shown that the quadrature error decays exponentially \cite{barrett1961convergence, chawla1968asymptotic, chawla1969davis, gautschi1983error}. This approach relies on the uniform asymptotic approximation of Legendre' functions for $ z \in \mathbb{C} \setminus U_\varepsilon([-1,1]) $, where $ U_\varepsilon([-1,1])$ denotes a fixed neighborhood of $[-1,1]$,
\begin{equation}
\begin{cases}
    P_n(z) = \frac{1}{(z^2-1)^{1/4}}  \frac{(z + \sqrt{(z^2-1})^{n+1/2}}{\sqrt{\pi( 2n+1)}} \left( 1 + O(\frac{1}{n}) \right),\\ 
    Q_n(z) = \frac{1}{(z^2-1)^{1/4}}  \frac{(z - \sqrt{(z^2-1})^{n+1/2}}{\sqrt{( 2n+1)/\pi}} \left( 1 + O(\frac{1}{n}) \right).\label{PQ_large}
\end{cases}
\end{equation}
Here the principal branch of $\sqrt{z^2-1}$ is chosen such that it is positive when $z>1$, and continuous on the $z$-plane along the branch cut $[-1,1]$. These results relate the convergence rate to the distance of the nearest singularity from the real axis, offering sharp and interpretable error predictions for those functions with good regularity.

In many practical applications, such as in boundary integral/element methods, fracture mechanics and electromagnetic scattering \cite{elliott2008clenshaw,huybrechs2009generalized,liang2021high,gupta2022finite,nesvit2014scattering,kayijuka2021fast}, the integrand is not analytic but merely continuous or of bounded variation. Understanding this behavior is essential both for guiding practical applications and for designing modified quadratures that recover higher accuracy. In literature, various of approaches \cite{sidi2011asymptotic,zhao2013sharp,huybrechs2009generalized,diethelm2014error,weideman2019gauss,kazashi2023suboptimality,goda2024sharp,trefethen2022exactness} have been developed to study the Gaussian quadrature error for nonsmooth integrands, such as using Chebyshev expansion \cite{curtis1972gaussian, riess1971error, trefethen2008gauss, trefethen2019approximation, xiang2012convergence, xiang2012error, xiang2016improved} and Peano kernel \cite{brass1997gaussian,petras1993asymptotics,brass2011quadrature,forster1991error,diethelm1996peano}. In such settings, Gaussian quadrature remains indispensable, but the error decays only algebraically, with convergence rates determined by the smoothness class of the integrand.


In this work, we aim to extend the usage of Eq. \eqref{Gauss_Error_Contour} to derive sharp asymptotic formulas for the error of Gauss quadrature when applied to functions with limited regularity. For such functions, their analytic continuations develop singularities on the real interval $(-1,1)$. These singularities pose a major challenge because they coincide with the branch cut of the Legendre functions, causing the contour $C$ to intersect with $(-1,1)$. This challenge is further complicated by the absence of uniformly valid asymptotic expansions for Legendre functions near $(-1,1)$, which obstructs precise kernel analysis and prevents straightforward estimation of the quadrature error. In \cite{kutz1984asymptotic}, Kutz discussed splitting the contour into two parts to estimate the convergence order for the integrand $|x-a|^\alpha$.

To address these challenges, we  consider the following strategy. First, we introduce a series of smoother functions $f_n(x)$ to approximate the integrand $f(x)$ of limited regularity, aiming to move the singularity away from the real interval $[-1,1]$. With proper choice of $f_n(x)$, and let $h_n(x)=f_n(x)-f(x)$, $R_n[h_n]$ could be controlled as higher order terms. Second, the error of $n$-point Gauss quadrature applied to $f_n(x)$ could be parametrized as a contour integral that depends on $n$. A uniform asymptotic approximation for $P_n(z)$ and $Q_n(z)$ on the contour is developed to further simplify the contour integral to a definite integral, allowing us to extract sharp asymptotics for $R_n[f_n(x)]$, and hence for $R_n[f(x)]$.

For a nonsmooth real function with finitely many isolated singularities on $(-3,3)$, it can be decomposed into a sum of components, each associated with a single singularity. Since singularities outside a Bernstein ellipse for $[-1,1]$ only affect the exponential convergence of Gaussian quadrature, it suffices to consider a single singularity in the interior of $[-1,1]$. In this paper, we focus on such interior singularities, omit the case of endpoint singularities and assume the following properties.
\begin{definition} \label{definition}
Let $k\geq 0$ be an integer, $\alpha \in (0,1]$. We say that a function $f(x)$ belongs to the class $D_b^{k,\alpha}[-1,1]$ if the following hold:  
\begin{enumerate}
    \item   
    $f(x)$ is $k$-times continuously differentiable on the interval $[b-c,b+c]$, with $b\in (-1,1)$ and $c=\max(1-b,1+b)$.
    \item There exists an analytic continuation $f(z)$ in a neighborhood of $[b-c,b+c]$, except along the branch cut $(b- i\infty, b+i\infty)$.
    \item $\displaystyle \limsup_{z \to b} |z-b|^{1-\alpha} |f^{(k+1)}(z)| < \infty$.
\end{enumerate}
\end{definition}
The branch cut $(b - i\infty,\, b + i\infty)$ will be discussed in Lemma~\ref{f_decomposition}. The last condition controls the singular behavior of the integrand on real axis, so that $D_b^{k,\alpha}[-1,1]$ is a subset of the H\"older space $C^{k,\alpha}$, thus a subset of Chebyshev space $X^{k+\alpha}$ as defined in \cite{trefethen2019approximation, xiang2016improved}. Moreover, it also guarantees that the complex function $f(z)$ remains integrable near the branch point,  thereby excluding essential singularities.

The main theorem is summarized below.
\begin{theorem} \label{main_theorem}
    For $f(x) \in D_b^{k,\alpha}[-1,1]$, let $\phi = \arccos(b)$ and $\Psi = (2n+1)\phi - \frac{\pi}{2}$. The asymptotic remainder of the $n$-point Gauss quadrature applied to $f(x)$ is,
    \begin{eqnarray}
        R_n[f(x)] &=& \frac{1}{ n} \int_{0}^{M \log n}  \Re\left(\left[ f(b+i\frac{y}{n})\right] \frac{  i\exp\left(\frac{-2y}{\sin\phi}\right) + i\cos \Psi + \sin \Psi  }{\cosh\left(\frac{2y}{\sin\phi}\right) + \cos \Psi}\right)
        dy \label{leading_order_theorem} \\ &&+\begin{cases}
        O(\frac{1}{n^{2\alpha+2k+1}}), \ \ \ \ \  \text{ if } k+\alpha <1,\\
        O(\frac{\log n}{n^{k+\alpha+2}}), \ \ \ \ \ \ \ \text{ if } k+\alpha =1,\\
        O(\frac{1}{n^{k+\alpha+2}}),\ \ \ \ \ \ \  \text{ if }  k+\alpha >1.
        \end{cases} \label{higher_order_estimate}
    \end{eqnarray}
    Here $\left[ f(b+i\frac{y}{n})\right]=f(b^++i\frac{y}{n})-f(b^-+i\frac{y}{n})$ represents the jump across the branch cut. The analytical continuation of $f(z)$ can be given by Eq. \eqref{f_analytical_continuation}. $\Re$ represents the real part. $M$ is a fixed number that chosen to be sufficiently large.
\end{theorem}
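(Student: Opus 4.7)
The plan is to follow the two-step strategy sketched in Section~\ref{sec:introduction}. In the first step, I would construct a regularized approximation $f_n(x)$ by convolving the singular part of $f$ (isolated via Lemma~\ref{f_decomposition}) against a smooth mollifier of width $\delta_n\sim (\log n)/n$. The effect is to displace the branch singularity from $b\in(-1,1)$ to $b\pm i\delta_n$, while preserving the jump of $f$ across the vertical branch cut outside the mollification window. Using condition~(3) of Definition~\ref{definition}, I would estimate pointwise derivatives of $h_n=f_n-f$ and then bound $R_n[h_n]$ by a Peano-kernel or Chebyshev-expansion argument; the three regimes $k+\alpha<1,\ =1,\ >1$ in \eqref{higher_order_estimate} arise naturally from the interplay between the mollifier width $\delta_n$ and the Hölder exponent $k+\alpha$ controlling $f$ near $b$.

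In the second step, I would substitute $f_n$ into \eqref{Gauss_Error_Contour} and deform the contour $C$. Since $f_n$ is analytic in a neighborhood of $[-1,1]$ except at $b\pm i\delta_n$, $C$ splits into (i) a large outer loop on a Bernstein ellipse of $O(1)$ parameter, whose contribution is exponentially small by \eqref{PQ_large}, and (ii) two keyholes wrapped around the short residual branch cuts issuing from $b\pm i\delta_n$. Collapsing each keyhole onto the vertical half-line yields a one-dimensional integral whose integrand carries the jump $[f(b+iy)]$ and the kernel $Q_n(b+iy)/P_n(b+iy)$. The substitution $y\mapsto y/n$ brings this into the form displayed in \eqref{leading_order_theorem} with the natural cutoff $y\le M\log n$.

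The heart of the argument is then the uniform asymptotic approximation of $Q_n(z)/P_n(z)$ on the rescaled contour $z=b+iy/n$, $y\in[0,M\log n]$. Neither the outer expansion \eqref{PQ_large} (which breaks down as $z\to[-1,1]$) nor the real Mehler--Hilb formula (which requires real $\theta$) applies directly. My plan is to analytically continue the Mehler--Hilb expansion $P_n(\cos\theta)\sim\sqrt{2/(\pi n\sin\theta)}\,\cos((n+\tfrac12)\theta-\tfrac{\pi}{4})$ and its $Q_n$ analogue to complex $\theta=\phi-iy/(n\sin\phi)+O(y^2/n^2)$ with explicit remainder control. Under this substitution $(n+\tfrac12)\theta-\tfrac{\pi}{4}$ becomes $\Psi/2-iy/\sin\phi$, and a short trigonometric identity converts the resulting ratio into the kernel
\[
\frac{i\,e^{-2y/\sin\phi}+i\cos\Psi+\sin\Psi}{\cosh(2y/\sin\phi)+\cos\Psi},
\]
after which taking real parts recovers \eqref{leading_order_theorem}.

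The hard part will be establishing this uniform Mehler--Hilb-type expansion along a contour that approaches the real interval to within $O((\log n)/n)$ while simultaneously letting $\theta$ pick up an imaginary part of the same size; standard proofs of Hilb's formula assume $\theta$ real, while the outer expansions assume $|z-b|$ bounded below. I expect to obtain the required uniform estimate by a Liouville--Green analysis of Legendre's ODE with complex argument, or equivalently by steepest-descent on a Laplace-type integral representation of $P_n$ and $Q_n$. Once this is in place, the tail $y>M\log n$ is handled by choosing $M$ large enough that $e^{-2M\log n/\sin\phi}=n^{-2M/\sin\phi}$ is absorbed into \eqref{higher_order_estimate}, and combining with the bound on $R_n[h_n]$ then completes the proof in all three regimes.
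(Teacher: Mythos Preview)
Your overall architecture matches the paper's---decompose via Lemma~\ref{f_decomposition}, regularize to push the branch point off the axis, apply \eqref{Gauss_Error_Contour} to the regularized function, and evaluate the vertical branch-cut integral using a uniform asymptotic for $Q_n/P_n$---but the scale you choose for the regularization, $\delta_n\sim(\log n)/n$, makes the argument collapse. On the rescaled vertical contour $z=b+iy/n$, the kernel $Q_n/P_n$ behaves like $e^{-2y/\sin\phi}$, so the entire leading contribution to \eqref{leading_order_theorem} comes from $y=O(1)$. If the branch point sits at height $\delta_n=(\log n)/n$, the vertical segment begins at $y=\log n$ in the rescaled variable and therefore contributes only an exponentially small amount; the full quadrature error is then buried in $R_n[h_n]$, from which you cannot extract the explicit formula \eqref{leading_order_theorem}. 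The paper instead displaces the singularity by only $1/n^{2}$, via the explicit substitution $|x-b|\mapsto\sqrt{(x-b)^2+n^{-4}}$ inside $g_1$ and $g_2$, so that the rescaled contour starts at $y=1/n$ and extending down to $y=0$ costs only $O(n^{-2k-2\alpha-1})$. With this much smaller displacement, $h_n$ is $O(n^{-2(k+\alpha)})$ near $b$, and the pointwise and bounded-variation estimates in Lemmas~\ref{lemma_g1} and~\ref{lemma_g2} then yield the three cases in \eqref{higher_order_estimate}.

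Two smaller but related issues. First, convolution with a compactly supported smooth mollifier produces a $C^\infty$ but generically non-analytic $f_n$, so \eqref{Gauss_Error_Contour} would not even apply; the paper's algebraic substitution is chosen precisely because it gives an analytic function with branch points at $b\pm i/n^{2}$ and the same jump structure as $f$. Second, an outer Bernstein ellipse of $O(1)$ parameter is not available: Definition~\ref{definition} only guarantees analyticity of $f$ in some (possibly thin) neighborhood of $[b-c,b+c]$, so $f_n$ need not extend that far. The paper takes the outer loop on the shrinking ellipse $|z+\sqrt{z^2-1}|=1+M(\log n)/n$, which for large $n$ lies inside any fixed neighborhood and still gives $|Q_n/P_n|\le O(n^{-2M})$ by Corollary~\ref{Q_P_ratio_bernstein}; this, not exponential decay on a fixed ellipse, is what controls the outer contribution.
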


\begin{remark}\label{main_order}
    Definition \ref{definition} implies $\left[ f(b+i\frac{y}{n})\right]\leq O(\frac{y^{k+\alpha}}{n^{k+\alpha}})$, thus the leading order term given in Eq. \eqref{leading_order_theorem} is $\leq O(\frac{1}{n^{k+\alpha+1}})$.
\end{remark}

\begin{remark}
The integral in Eq. \eqref{leading_order_theorem} may be evaluated over $[0,\infty)$, understood in the asymptotic sense as  $n \to \infty$ with $\frac{M \log n}{n} \to 0$. \label{integral_infty}
\end{remark}

\begin{remark}
The leading-order term in Eq.~\eqref{leading_order_theorem} is determined solely by the local behavior of $f(x)$ near the singularity $x=b$. So, the 2nd condition in Definition \ref{definition} may be relaxed to: there exists an analytic continuation $f(z)$ in a neighborhood of $[b-\delta,b+\delta]$, except along the branch cut, with $\delta$ being an arbitrary positive number. In such a case, Eq.~\eqref{leading_order_theorem} can be applied to estimate the error contribution from each singular point.
\end{remark}
\begin{remark}
    Eq. \eqref{leading_order_theorem} can be used as a correction term in Gauss quadrature, so that the error convergence is improved to Eq. \eqref{higher_order_estimate}. See Example \ref{example5}.
\end{remark}

Throughout the paper, we adopt the standard $O$ and $o$ notations, following the classical references \cite{szeg1939orthogonal,olver1997asymptotics,olver2010nist}:
\begin{equation}
    f_n \sim g_n \Longleftrightarrow \lim_{n \to \infty}\frac{f_n}{g_n} = 1, \ \ f_n \leq  O\left(g_n\right)   \Longleftrightarrow \limsup_{n\to \infty} \left|\frac{f_n}{g_n}\right|  < \infty.
\end{equation}
\begin{eqnarray}
    &&f_n(x) \leq  O\left(g_n(x))\right) \text{ uniformly for } x \in S_n  \nonumber\\ &&\Longleftrightarrow \limsup_{n\to \infty} \sup_{x \in S_n}\left|\frac{f_n(x)}{g_n(x)}\right|  < \infty.
\end{eqnarray}
\begin{eqnarray}
    &&f_n(x) = g_n(x) + O(h_n(x)) \text{ uniformly for } x \in S_n  \nonumber\\ &&\Longleftrightarrow \limsup_{n\to \infty} \sup_{x \in S_n}\left|\frac{f_n(x) -g_n(x)}{h_n(x)}\right|  < \infty.
\end{eqnarray}

The rest of the paper is organized as follows. Section~\ref{sec:asymptotic} introduces the uniform asymptotic approximations of Legendre functions, which forms the basis for the proof of the main theorem. Section~\ref{sec:proof} proves the main theorem and applies the theorem to functions with power and logarithmic singularities. Section~\ref{sec:numerics} presents numerical examples and discussions.

\section{Asymptotic Approximations of Legendre Functions}\label{sec:asymptotic}
Extensive work exists in the literature on uniform asymptotic expansions of Legendre functions, for example \cite{dunster2003uniform,szeg1939orthogonal,olver1997asymptotics,olver2010nist,bakaleinikov2020uniform,nemes2020large} and the references therein. However, these formulas typically require either that $z$ being real or that it maintain a positive separation from the interval $[-1,1]$. Consequently, they cannot be directly applied to our setting. 

We introduce the following lemmas and corollaries; their proofs are provided in the appendix.
\begin{lemma} \label{lemma_P_uniform}
    The following asymptotic approximation is uniform in $\displaystyle z \in \Omega_n = \left\{  z = \cosh(\xi) \left| \bigl|\cosh\!\bigl((n+\tfrac{1}{2})\xi\bigr)\bigr| \;\geq\; \frac{L}{n}, |\sinh(\xi)| \geq \epsilon\right\} \right.$, as $n \to \infty$, with $L$ and $\epsilon$ being arbitrary positive constants,
    \begin{eqnarray}
    P_n(\cosh \xi) =&& \sqrt{\frac{i}{2n\pi \sinh \xi}}\left[\left(1 - \frac{1}{4n} + \frac{\coth(\xi)}{8n}\right)\exp\left((n+\frac{1}{2})\xi-i\frac{\pi}{4}\right) \right. \nonumber\\
    && \left.  + \left(1 - \frac{1}{4n} - \frac{\coth(\xi)}{8n}\right)\exp\left(-(n+\frac{1}{2})\xi+i\frac{\pi}{4}\right)\right] \left( 1 + O(\frac{1}{n}) \right). \label{P_asymptotic_uniform}
\end{eqnarray}
Here the principal branch of $\xi = \log(z + \sqrt{z^2-1})$ is chosen such that $\xi$ is positive when $z>1$, and continuous on the $z$-plane along the branch cut $[-1,1]$.
\end{lemma}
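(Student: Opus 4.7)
My plan is to apply the method of steepest descent to an integral representation for $P_n(\cosh\xi)$ while retaining both saddle-point contributions, which is essential for uniform validity as $z$ approaches $[-1,1]$. The classical formula \eqref{PQ_large} keeps only the dominant saddle and must break down where the subdominant exponential $e^{-(n+1/2)\xi}$ becomes comparable in modulus to $e^{(n+1/2)\xi}$, namely for $\Re\xi$ near $0$. The lemma is essentially the two-saddle refinement of \eqref{PQ_large}, with the next-order $1/n$ correction tracked.

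A natural starting point is the Laplace representation $P_n(\cosh\xi)=\frac{1}{\pi}\int_0^\pi(\cosh\xi+\sinh\xi\cos\theta)^n\,d\theta$, or equivalently the Schl\"afli contour integral, analytically continued to complex $\xi$. The phase $\Phi(\theta;\xi)=\log(\cosh\xi+\sinh\xi\cos\theta)$ has simple saddle points at $\theta=0$ and $\theta=\pi$ with $\Phi=\pm\xi$ and $\Phi''$ of order $\sinh\xi$; the assumption $|\sinh\xi|\geq\epsilon$ keeps them non-degenerate and well separated. I would deform the contour through both steepest-descent paths (justified by analytic continuation from real $\xi>0$), compute the Gaussian integral at each saddle, and combine the prefactors. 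Extracting the common factor $\sqrt{i/(2\pi n\sinh\xi)}$ yields the sum of the two exponentials $e^{\pm(n+1/2)\xi\mp i\pi/4}$; the $\mp i\pi/4$ phases come from the orientation of the descent contour at each saddle.

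Next I would extract the $1/n$ corrections by expanding $\Phi$ through cubic and quartic order and the amplitude through first order at each saddle, then applying the standard Gaussian-moment identities. The coefficient at each saddle takes the form $-\tfrac14\pm\tfrac{\coth\xi}{8}$. The saddle-symmetric piece $-\tfrac14$ is the Stirling-type correction $\sqrt{\pi(2n+1)}=\sqrt{2n\pi}\bigl(1+\tfrac{1}{4n}+\cdots\bigr)$ needed to match \eqref{PQ_large} wherever only one exponential survives. The saddle-antisymmetric piece $\pm\tfrac{\coth\xi}{8}$ encodes the asymmetry between the cubic expansions of $\Phi$ at the two saddles, which reflects how the integrand tilts as $\xi$ varies.

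The main obstacle is uniformity on the $n$-dependent domain $\Omega_n$. When $|\Re\xi|$ is of order $1$ one exponential dominates and relative $O(1/n)$ follows from the standard steepest-descent remainder estimate. The delicate region is $|\Re\xi|=o(1)$, where the two exponentials interfere and the leading combination $2\cosh\!\bigl((n+\tfrac12)\xi-i\tfrac{\pi}{4}\bigr)$ vanishes on a discrete family of curves. I would bound the absolute steepest-descent error uniformly by $Cn^{-3/2}|\sinh\xi|^{-1/2}\max(|e^{(n+1/2)\xi}|,|e^{-(n+1/2)\xi}|)$, and then use the hypothesis $|\cosh((n+\tfrac12)\xi)|\geq L/n$ — which, up to adjusting $L$, yields a lower bound of the same order for $|\cosh((n+\tfrac12)\xi-i\tfrac{\pi}{4})|$ — to conclude that the leading combination has modulus at least $\sim Ln^{-3/2}|\sinh\xi|^{-1/2}$. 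Dividing gives the claimed uniform $O(1/n)$ relative error, with implicit constants depending on $L$ and $\epsilon$.
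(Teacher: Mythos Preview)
Your proposal matches the paper's proof: both start from the Laplace integral $P_n(\cosh\xi)=\tfrac{1}{\pi}\int_0^\pi(\cosh\xi+\sinh\xi\cos\theta)^n\,d\theta$, extract the contributions from the two endpoints $\theta=0,\pi$ together with their first $1/n$ corrections (the paper via an explicit change of variable $\theta\mapsto\nu$ that linearizes the phase, you via the standard Gaussian-moment expansion of steepest descent), combine them into the stated two-exponential formula, and then invoke the domain restriction to prevent cancellation from destroying the relative $O(1/n)$ bound.

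One caution on the uniformity step. Your assertion that $|\cosh((n+\tfrac12)\xi)|\geq L/n$ ``up to adjusting $L$'' yields $|\cosh((n+\tfrac12)\xi-i\tfrac{\pi}{4})|\geq L'/n$ is false as a pointwise implication: writing $w=(n+\tfrac12)\xi$, the zero sets of $\cosh w$ and $\cosh(w-i\pi/4)$ are disjoint and separated by $\pi/4$ along the imaginary axis, so near a zero of the latter one has $|\cosh w|\approx 1/\sqrt{2}$, which certainly satisfies the hypothesis for large $n$. To make the uniformity argument go through you must bound the full bracketed combination $(1-\tfrac{1}{4n})\,2\cosh(w-i\tfrac{\pi}{4})+\tfrac{\coth\xi}{4n}\,\sinh(w-i\tfrac{\pi}{4})$ from below directly, using that the $\sinh$ term picks up where the $\cosh$ term vanishes. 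The paper's own proof is equally terse at this exact point, so this is a detail both treatments leave to the reader rather than a divergence in approach.
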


\begin{remark}
    When $ z \in \mathbb{C} \setminus U_\varepsilon([-1,1]) $, Eq. \eqref{P_asymptotic_uniform} is consistent with Eq. \eqref{PQ_large}. When $z= \cos \phi \in (-1,1)$, Eq. \eqref{P_asymptotic_uniform} is consistent with Hilb's approximation,
    \begin{equation}
    \displaystyle P_n(\cos \phi) = \sqrt{\frac{2}{n \pi \sin \phi }}\cos\left( (n+\frac{1}{2})\phi -  \frac{\pi}{4} \right)  + O(\frac{1}{n^{\frac{3}{2}}}). \label{P_real}
\end{equation}
\end{remark}
\begin{lemma}
The following asymptotic approximation is uniform in $\displaystyle z \in \Omega'_n = \left\{  z = \cosh(\xi) \left| |\sinh(\xi)| \geq \epsilon\right\} \right.$, as $n \to \infty$,
    \begin{equation}
    Q_n(\cosh(\xi)) = \sqrt{\frac{\pi}{2(n+1)i \sinh \xi}}\exp\left(-(n+\frac{1}{2})\xi+i\frac{\pi}{4}\right)  \left( 1 + O(\frac{1}{n}) \right) \label{Q_asymptotic_uniform}
\end{equation}
\end{lemma}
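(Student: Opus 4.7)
My plan is to adapt the Liouville--Green (WKB) analysis already carried out for $P_n$ in Lemma~\ref{lemma_P_uniform}. The critical simplification for $Q_n$ is that it is characterized, up to a scalar, as the recessive solution of the Legendre equation at $|z|\to\infty$, so only one of the two WKB modes contributes. Consequently the delicate balancing between growing and decaying exponentials --- which forced the restriction $|\cosh((n+\tfrac{1}{2})\xi)|\geq L/n$ in the $P_n$ case --- is absent here, and the formula should hold on the enlarged region $\Omega'_n = \{|\sinh\xi|\geq \epsilon\}$, where the effective WKB potential stays bounded.

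Concretely I would proceed in three steps. First, substitute $z=\cosh\xi$ and $w(z) = (\sinh\xi)^{-1/2} v(\xi)$ in $(1-z^2)w'' - 2z w' + n(n+1) w = 0$; a short calculation reduces the Legendre ODE to the Schr\"odinger normal form
\[
v''(\xi) = \left[\,\bigl(n+\tfrac{1}{2}\bigr)^2 - \frac{1}{4\sinh^2\xi}\,\right] v(\xi),
\]
in which the perturbation $V(\xi) = -1/(4\sinh^2\xi)$ is analytic and uniformly bounded on $\Omega'_n$, and moreover decays exponentially as $\Re\xi\to+\infty$. Second, write the recessive solution as $v_-(\xi) = e^{-(n+\frac{1}{2})\xi}[1 + \psi_n(\xi)]$; converting the ODE into a Volterra integral equation for $\psi_n$, integrated from $+\infty$ to $\xi$ along a path whose real part is monotonically nondecreasing, the standard Picard/Gronwall iteration (cf.~\cite{olver1997asymptotics}, Ch.~6) yields $|\psi_n(\xi)|\leq C/n$ uniformly on $\Omega'_n$. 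Third, because $Q_n$ is the unique (up to scale) recessive solution at $\Re z\to+\infty$, one has $Q_n(\cosh\xi) = \kappa_n (\sinh\xi)^{-1/2} v_-(\xi)$; matching with Eq.~\eqref{PQ_large} on the sub-region $\{\Re\xi\geq\delta\}$, where that expansion is already known, pins down $\kappa_n = \sqrt{\pi/(2(n+1)i)}\,e^{i\pi/4}\,(1+O(1/n))$ and assembles the claimed formula.

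The main obstacle is making the Volterra step uniform on all of $\Omega'_n$. For each $\xi\in\Omega'_n$ one must exhibit a path from $\xi$ to $+\infty$ satisfying (i) it stays inside $\Omega'_n$, so $V$ is defined and bounded on it; (ii) $\Re t$ is nondecreasing, so the Green's-function kernel $|1 - e^{-(2n+1)(t-\xi)}|$ is controlled by $2$; and (iii) $\int |V(t)|\,|dt|$ is uniformly bounded in $\xi$, making the iteration contractive with factor $O(1/n)$. The delicate case is $\xi$ close to the imaginary axis (i.e., $z$ close to $(-1,1)$); here a horizontal ray $t = \xi + s$ with $s\geq 0$ works, and the identity $|\sinh(\xi+s)|^2 = \sinh^2(\Re\xi+s) + \sin^2\Im\xi$, combined with the exponential growth of $|\sinh|$ at large $\Re t$, gives the required uniform bound on $\int|V|\,|dt|$. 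The subsequent matching that determines $\kappa_n$ is then routine.
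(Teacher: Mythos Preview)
Your argument is correct and would prove the lemma, but it follows a genuinely different route from the paper. The paper does not use Liouville--Green theory for either $P_n$ or $Q_n$; both lemmas are obtained by Laplace's method applied to explicit integral representations. For $Q_n$ the paper starts from
\[
Q_n(\cosh\xi) \;=\; \int_0^\infty \exp\!\bigl[-(n+1)\log(\cosh\xi + \sinh\xi\,\cosh\theta)\bigr]\,d\theta
\]
and extracts the endpoint contribution at $\theta=0$; the entire proof occupies a few displayed lines and directly parallels the $P_n$ argument, where the same Laplace integral over $[0,\pi]$ (with $\cos\theta$ in place of $\cosh\theta$) was expanded at both endpoints. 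Your WKB/Volterra approach instead works purely at the ODE level: once the control integral $\int|V(t)|\,|dt|$ along horizontal rays is uniformly bounded on $\Omega'_n$ --- which you correctly verify via $|\sinh(\xi+s)|^2=\sinh^2(\Re\xi+s)+\sin^2\Im\xi$ --- Olver's standard error bounds deliver the $O(1/n)$ remainder automatically, and recessiveness of $Q_n$ fixes the normalizing constant after matching with~\eqref{PQ_large}. Your method makes the uniformity mechanism more transparent and would extend to equations lacking a convenient integral representation; the paper's method is much shorter and keeps the treatments of $P_n$ and $Q_n$ methodologically unified. Note in passing that your opening sentence mischaracterizes the paper's $P_n$ proof as a WKB analysis; it is not, so your $Q_n$ argument would in fact stand apart from, rather than adapt, the surrounding material.
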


\begin{corollary} \label{Q_P_ratio_bernstein}
    For fixed $M>0$, on the Bernstein ellipse
\begin{equation}
    \Omega_B = \left\{ z \in \mathbb{C} \Big| \big| z + \sqrt{z^2 - 1} \big| = 1 + \frac{M \log n}{n}\right\},
\end{equation}
the asymptotic ratio between $Q_n(z)$ and $P_n(z)$ is uniformly bounded by,
    \begin{equation}
        \left|\frac{Q_n(z)}{P_n(z)} \right| \leq O(\frac{1}{n^{2M}}).
    \end{equation} 
\end{corollary}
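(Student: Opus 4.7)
The strategy is to parametrize the Bernstein ellipse via $\xi$ and then combine the two uniform approximations of this section, using the fact that on $\Omega_B$ the ``big'' exponential $e^{(n+\frac{1}{2})\xi}$ has magnitude $n^M$ while the ``small'' exponential $e^{-(n+\frac{1}{2})\xi}$ has magnitude $n^{-M}$.

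First I would write $z = \cosh\xi$, so that $|z+\sqrt{z^2-1}| = |e^\xi|$, and $z\in\Omega_B$ is equivalent to $|e^\xi| = \rho_n := 1+\frac{M\log n}{n}$, i.e.\ $\Re(\xi) = \log \rho_n = \frac{M\log n}{n} + O\!\left(\left(\tfrac{\log n}{n}\right)^{2}\right)$. The fundamental computation is then
\begin{equation*}
|e^{(n+\frac{1}{2})\xi}| = \rho_n^{n+\frac{1}{2}} = \exp\!\left((n+\tfrac{1}{2})\log \rho_n\right) = \exp\!\left(M\log n + o(1)\right) \sim n^{M},
\end{equation*}
and correspondingly $|e^{-(n+\frac{1}{2})\xi}| \sim n^{-M}$.

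Next I would apply Eq.~\eqref{Q_asymptotic_uniform} directly: on the portion of $\Omega_B$ with $|\sinh\xi|\ge\epsilon$, it gives $|Q_n(z)| \leq C n^{-M}/\sqrt{n|\sinh\xi|}$. For $P_n$, the estimate just obtained shows that the first bracketed term in Eq.~\eqref{P_asymptotic_uniform} is of size $n^{M}$ and dominates the second term of size $n^{-M}$; in particular $|\cosh((n+\tfrac12)\xi)|\ge \tfrac12 n^{M}-O(n^{-M}) \gg L/n$, which automatically verifies the hypothesis of Lemma~\ref{lemma_P_uniform}. That lemma then yields $|P_n(z)| \geq C'\, n^{M}/\sqrt{n|\sinh\xi|}$. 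Taking the ratio, the factor $1/\sqrt{n|\sinh\xi|}$ cancels and we are left with $|Q_n(z)/P_n(z)| \leq O(n^{-2M})$, as claimed.

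The main obstacle is the neighborhood of the endpoints $z=\pm 1$: as $n\to\infty$, $\Omega_B$ shrinks onto $[-1,1]$ and near $\pm 1$ one has $|\sinh\xi|\to 0$, so the uniformity condition $|\sinh\xi|\ge\epsilon$ in both lemmas fails. To deal with this rigorously I would either restrict the statement to the portion of $\Omega_B$ at a fixed distance from $\pm 1$ (which is sufficient for the subsequent contour-integral analysis provided the small endpoint arcs are shown separately to contribute a negligible remainder) or invoke a Mehler--Heine type local expansion near $\pm 1$ to confirm that the ratio $Q_n/P_n$ retains the same $O(n^{-2M})$ bound in those excluded caps.
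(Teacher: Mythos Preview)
Your main computation --- parametrizing $\Omega_B$ by $\Re\xi=\log\rho_n$, observing $|e^{\pm(n+\frac12)\xi}|\sim n^{\pm M}$, and reading off the ratio --- is exactly right and matches the paper. The difference lies entirely in how the endpoint obstacle is handled.

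You propose to invoke Lemmas~\ref{lemma_P_uniform} and~2.3 as black boxes and then patch the caps near $z=\pm1$ separately (either by splitting the contour or by a Mehler--Heine argument). The paper does neither. Instead it observes that on \emph{all} of $\Omega_B$ one has the uniform lower bound $|\sinh\xi|\ge \sinh(\log\rho_n)\sim M\log n/n$, and then reruns the Laplace-method derivation of the two lemmas with this $n$-dependent lower bound in place of the fixed $\epsilon$. The price is a weaker relative error, $O(1/\log n)$ rather than $O(1/n)$, in both expansions; but since the leading-term ratio already gives $n^{-2M}$, an $O(1/\log n)$ correction is harmless. This yields the bound uniformly on the entire ellipse in one stroke, with no separate endpoint analysis.

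Your route would ultimately work, but it leaves real labour undone: the Mehler--Heine matching near $\pm1$ is not a one-liner, and the ``restrict and show the arcs are negligible'' option would force you to modify the contour $C_B$ downstream in the proof of Theorem~\ref{main_theorem}. The paper's device --- trading a fixed $\epsilon$ for $\epsilon_n\sim\log n/n$ and accepting the coarser error --- is the cleaner fix and worth internalizing.
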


\begin{corollary} \label{Q_P_ratio_corollary}
With given $b = \cos \phi \in (-1,1)$, and arbitrary $M >0$, the following asymptotic approximation is uniform for all $\displaystyle y \in \left[\frac{1}{n}, M \log n\right]$, as $n \to \infty$,
\begin{equation}
    \frac{Q_n(b +\frac{i}{n}y)}{P_n(b +\frac{i}{n}y)} = \frac{i \pi}{\exp\left(\frac{2y}{\sin\phi}+i\Psi\right) + 1} + O\left(\frac{1}{n \left(\exp\left(\frac{2y}{\sin\phi}\right)-1\right)^2}\right). \label{Q_P_ratio}
\end{equation}
\end{corollary}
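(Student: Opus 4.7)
The plan is to substitute $z = b + iy/n$ into the uniform asymptotic expansions from the two preceding lemmas, form the ratio $Q_{n}/P_{n}$, and expand. Let $\xi = \cosh^{-1}(b+iy/n)$ on the principal branch from Lemma~\ref{lemma_P_uniform}. Since $\cosh(i\phi) = b$ and $d\xi/dz = 1/\sinh\xi$, Taylor expansion around $y=0$ yields
\[
\xi = i\phi + \frac{y}{n\sin\phi} + O\!\left(\frac{y^{2}}{n^{2}}\right).
\]
Setting $A := (n+\tfrac12)\xi - i\pi/4$ and using $\Psi = 2(n+\tfrac12)\phi - \pi/2$, this gives $2A = i\Psi + 2y/\sin\phi + O(y/n + y^{2}/n)$, whence $e^{2A} = e^{\,i\Psi+2y/\sin\phi}\bigl(1 + O(y/n + y^{2}/n)\bigr)$.

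Next I verify the hypotheses of the two lemmas on this curve. The condition $|\sinh\xi|\geq\epsilon$ is immediate from $|\sinh\xi|\to\sin\phi$. For the condition on $\Omega_n$, the identity $|\cosh(A+i\pi/4)|^{2} = \cos^{2}(\Im A + \pi/4) + \sinh^{2}(\Re A)$ combined with $\Re A = y/\sin\phi + O(y/n+y^{2}/n)$ gives $|\cosh((n+\tfrac12)\xi)| \geq \sinh(y/(2\sin\phi)) \geq 1/(2n\sin\phi)$ throughout $y\in[1/n,M\log n]$, so the hypothesis holds with $L = 1/(2\sin\phi)$. Substituting into the ratio, the algebraic prefactors collapse to $i\pi\sqrt{n/(n+1)}(1+O(1/n)) = i\pi(1+O(1/n))$, and using $\tanh A = (e^{2A}-1)/(e^{2A}+1)$ to merge the two exponentials in the Legendre-polynomial bracket yields
\[
\frac{Q_{n}(z)}{P_{n}(z)} = \frac{i\pi\,(1+O(1/n))}{(e^{2A}+1)\bigl[(1-\tfrac{1}{4n})+\tfrac{\coth\xi}{8n}\tanh A\bigr]}.
\]
Inserting the expansion of $e^{2A}$ into the denominator produces the advertised leading term $i\pi/(e^{i\Psi+2y/\sin\phi}+1)$.

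The main obstacle is the uniform error bound, because both factors in the denominator can simultaneously become small when $\Psi$ is near $\pi \bmod 2\pi$. The key elementary estimate is the $\Psi$-minimisation $|e^{i\Psi+U}+1| \geq e^{U}-1$, where $U := 2y/\sin\phi$; for $y\geq 1/n$ this yields $|e^{2A}+1| \geq (e^{U}-1)/2 \geq 1/(n\sin\phi)$, which keeps the bracketed correction factor bounded and invertible, even though $\tanh A$ itself may grow like $O(n)$ in the worst configuration (the $1/(8n)$ prefactor tames it). Propagating the residual $O(1/n)$ multiplicative errors from the prefactor, the shift $e^{2A} = e^{i\Psi+U}(1+O(y/n+y^{2}/n))$, and the subleading terms in the bracket, and consistently consolidating each contribution via the lower bound $|e^{2A}+1| \geq e^{U}-1$, the cumulative error reduces to the stated form $O\bigl(1/(n(e^{2y/\sin\phi}-1)^{2})\bigr)$.
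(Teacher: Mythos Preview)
Your approach is essentially the paper's: Taylor-expand $\xi$ about $i\phi$, insert into the asymptotics of the two preceding lemmas, form the ratio, and control the denominator via the elementary lower bound $\lvert e^{i\Psi+2y/\sin\phi}+1\rvert\ge e^{2y/\sin\phi}-1$ (the paper states it in the equivalent form $\lvert c_+e^{i\Psi+U}+\bar c_+\rvert\ge\lvert c_+\rvert(e^U-1)$). Your explicit verification of the $\Omega_n$ hypothesis and the $\tanh A$ repackaging of the bracket are minor additions that the paper simply omits or writes additively, but the argument is otherwise identical.
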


\section{Gauss Quadrature Error Asymptotes} \label{sec:proof}\begin{lemma} \label{f_decomposition}
    Let $f(x) \in D_b^{k,\alpha}[-1,1]$, with one singular point $x=b \in (-1,1)$, let $c = \max(1-b,1+b)$, then $f(x)$ can be represented in the form,
    \begin{equation}
        f(x) = P_{k}(x) + (x-b)^k g_1(|x-b|) + (x-b)^{k+1} g_2(|x-b|),
    \end{equation}
    \begin{itemize}
        \item $P_k(x)$ is a polynomial of degree $k$.
        \item $g_1(x)$ is analytic on $(0,c]$, \begin{equation}
            \limsup_{x\to 0^+} x^{l-\alpha} |g_1^{(l)}(x)| < \infty, \text{ for } l=0,1,\cdots,k+1. \label{g_1_condition}
        \end{equation}
        \item $g_2(x)$ is analytic on $(0,c]$, \begin{equation}
            \limsup_{x\to 0^+} x^{l+1-\alpha} |g_2^{(l)}(x)| < \infty, \text{ for } l=0,1,\cdots,k+1. \label{g_2_condition}
        \end{equation}
    \end{itemize}
    Moreover, the analytical continuation near $[b-c,b+c]$ is given by,
    \begin{equation}
        f(z) = P_{k}(z) + (z-b)^k g_1(\sqrt{(z-b)^2}) + (z-b)^{k+1} g_2(\sqrt{(z-b)^2}), \label{f_analytical_continuation}
    \end{equation}
    with branch cut $(b-i\infty,b+i\infty)$, so that $\sqrt{(x-b)^2}=|x-b|$ on the real axis.
\end{lemma}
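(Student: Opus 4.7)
The plan is to construct $P_k$, $g_1$, $g_2$ explicitly by decomposing the remainder $r = f - P_k$ into its parts symmetric and antisymmetric about $b$, verify the required small-$t$ decay via the integral form of Taylor's remainder combined with Cauchy estimates, and finally promote the real identity to the complex plane using the prescribed branch of $\sqrt{(z-b)^2}$ together with analytic continuation on each side of the cut.

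Take $P_k(x) = \sum_{j=0}^{k} \frac{f^{(j)}(b)}{j!}(x-b)^j$ (well defined by condition~1 of Definition~\ref{definition}), set $r = f - P_k$, and for $t \in (0,c]$ define
\[
g_1(t) = \frac{r(b+t) + (-1)^k r(b-t)}{2t^k}, \qquad g_2(t) = \frac{r(b+t) - (-1)^k r(b-t)}{2t^{k+1}}.
\]
Checking the signs $x > b$ and $x < b$ separately, so that $|x-b| = \pm(x-b)$, gives $(x-b)^k g_1(|x-b|) + (x-b)^{k+1} g_2(|x-b|) = r(x)$ on $[b-c,b+c]$ by direct algebra, which is the required decomposition. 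Analyticity of $g_1, g_2$ on $(0,c]$ is immediate because condition~2 makes $t \mapsto r(b \pm t)$ analytic off $b$ and the denominators $t^k, t^{k+1}$ do not vanish.

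For the regularity estimates, substitute the integral Taylor remainder $r(b \pm v) = \frac{(\pm v)^{k+1}}{k!}\int_0^1 (1-\sigma)^k f^{(k+1)}(b \pm v\sigma)\,d\sigma$ into the definitions; the $t^k, t^{k+1}$ factors cancel and one obtains
\[
g_1(t) = \frac{t}{2\,k!} \int_0^1 (1-\sigma)^k \left[f^{(k+1)}(b+t\sigma) - f^{(k+1)}(b-t\sigma)\right] d\sigma,
\]
\[
g_2(t) = \frac{1}{2\,k!} \int_0^1 (1-\sigma)^k \left[f^{(k+1)}(b+t\sigma) + f^{(k+1)}(b-t\sigma)\right] d\sigma.
\]
A Cauchy estimate on circles of radius $|x-b|/2$ around each real $x \neq b$ near $b$, which stay on one side of the vertical cut, lifts the single bound of condition~3 to $|f^{(k+1+m)}(x)| \leq C_m |x-b|^{\alpha-1-m}$ for every $m \geq 0$. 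Differentiating the two integrals $l$ times under the integral sign and inserting these bounds yields $|g_1^{(l)}(t)| \leq O(t^{\alpha - l})$ and $|g_2^{(l)}(t)| \leq O(t^{\alpha - 1 - l})$ uniformly for $0 \leq l \leq k+1$, which are exactly Eqs.~\eqref{g_1_condition}--\eqref{g_2_condition}.

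Finally, for the analytic continuation formula \eqref{f_analytical_continuation}, extend $g_1, g_2$ by the same quotients to the set $\{0 < |v| \leq c,\ \Re(v) > 0\}$, now reading $f(b+v)$ and $f(b-v)$ via the analytic continuations of $f$ on the right and left of the cut $(b - i\infty, b + i\infty)$; both extensions are analytic there. The prescribed branch of $\sqrt{(z-b)^2}$ equals $z-b$ when $\Re z > b$ and $b - z$ when $\Re z < b$, so substituting $z = b + v$ and $z = b - v$ into the right-hand side of \eqref{f_analytical_continuation} reproduces $f(z)$ directly from the very construction of $g_1, g_2$, first on the real segments and then, by uniqueness of analytic continuation, on each half of the punctured neighborhood of $[b-c,b+c]$. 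I expect the main obstacle to be the regularity step above: promoting the single pointwise bound of condition~3 into sharp rates for all derivatives $g_1^{(l)}, g_2^{(l)}$ requires both the Cauchy upgrade to every higher derivative of $f^{(k+1)}$ and the crucial cancellation $r(b+v) \pm (-1)^k r(b-v) = O(v^{k+1})$, which must be exposed before dividing by $t^k$ or $t^{k+1}$.
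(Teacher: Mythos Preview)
Your construction of $P_k$, $g_1$, $g_2$ is exactly the paper's: writing $g(x)=r(x)/(x-b)^k$, the paper sets $g_1(t)=\tfrac12(g(b+t)+g(b-t))$ and $g_2(t)=\tfrac1{2t}(g(b+t)-g(b-t))$, which unwind to your quotient formulas in $r$. The analytic-continuation argument is also the same in substance (the paper phrases it via the identity theorem applied to $\tilde g_j(z)$ versus $g_j(\sqrt{z^2})$, you argue directly by evaluating the right-hand side of \eqref{f_analytical_continuation} on each half-plane). The one genuine difference is how you obtain the derivative bounds \eqref{g_1_condition}--\eqref{g_2_condition}. The paper first bounds $r^{(l)}(x)=O(|x-b|^{k+\alpha-l})$, then runs an induction through the Leibniz rule for $((x-b)^k g)^{(l)}$ to get $g^{(l)}=O(|x-b|^{\alpha-l})$, symmetrizes to reach $g_1^{(l)}$, and runs a second induction on $(xg_2)^{(l)}$ to reach $g_2^{(l)}$. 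You instead insert the integral Taylor remainder directly into the definitions of $g_1,g_2$, upgrade condition~3 to $|f^{(k+1+m)}(x)|\le C_m|x-b|^{\alpha-1-m}$ via Cauchy estimates on disks of radius $|x-b|/2$ (which stay off the cut and inside the domain of analyticity for $x$ near $b$), and differentiate under the integral sign; the resulting $\sigma^{\alpha-1}$ factor is integrable since $\alpha>0$, so the bounds follow without induction. Your route is a bit slicker in that it bypasses both Leibniz inductions, at the cost of invoking the complex-analytic hypothesis (condition~2) to justify the Cauchy step; the paper's argument is purely real-variable once the Taylor bound on $r^{(l)}$ is in hand. Both are correct.
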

\begin{proof} 
Let \begin{equation}
    P_k(x) = \sum_{s=0}^k f^{(s)}(b)\frac{(x-b)^s}{s!}, \ \ g(x) = \frac{f(x)-P_k(x)}{(x-b)^k}.
\end{equation} 
Then $g(x)$ is analytic on $[b-c,b)$ and $(b,b+c]$. For $l =0,\cdots,k+1$,
\begin{equation}
    \limsup_{x\to b} \frac{|\left((x-b)^k g(x)\right)^{(l)}|}{|x-b|^{\alpha+k-l}}=\limsup_{x\to b} \frac{|f^{(l)}(x)-P^{(l)}(x)|}{|x-b|^{\alpha+k-l}} < \infty,
\end{equation}
Thus one can apply induction from $l=0$ to $l =k+1$,
\begin{equation}
    \limsup_{x\to b} (x-b)^{l-\alpha} |g^{(l)}(x)| < \infty, \text{ for } l =0,\cdots,k+1.
\end{equation}
Define even functions
\begin{equation}
    g_1(x) = \frac{g(b+x) + g(b-x)}{2}, \ g_2(x) = \frac{g(b+x) - g(b-x)}{2x}.
\end{equation}
Then,
\begin{eqnarray}
    &&\limsup_{x\to 0^+} x^{l-\alpha} |g^{(l)}_1(x)| \nonumber\\
    &\leq& \limsup_{x\to b^+}   |x-b|^{l-\alpha}\left|\frac{g^{(l)}(x)}{2}\right| + \limsup_{x\to b^-}   |x-b|^{l-\alpha}\left|\frac{g^{(l)}(x)}{2}\right| < \infty,
\end{eqnarray}
and
\begin{eqnarray}
    &&\limsup_{x\to 0^+} x^{l-\alpha} |(xg_2(x))^{(l)}| \nonumber\\
    &\leq& \limsup_{x\to b^+}   |x-b|^{l-\alpha}\left|\frac{g^{(l)}(x)}{2}\right| + \limsup_{x\to b^-}   |x-b|^{l-\alpha}\left|\frac{g^{(l)}(x)}{2}\right| < \infty. \label{xg2_regularity}
\end{eqnarray}
Apply induction on Eq. \eqref{xg2_regularity} from $l=0$ to $l =k+1$, further imply,
\begin{equation}
    \limsup_{x\to 0^+} x^{l+1-\alpha} |g_2^{(l)}(x)| < \infty, \text{ for } l = 0, \cdots, k+1.
\end{equation}
Similarly, let $f(z)$ be the analytical continuation in a neighborhood of $[b-c,b+c]$, and define 
\begin{equation}
    g(z) = \frac{f(z)-P_k(z)}{(z-b)^k}, \ \ \tilde{g}_1(z) = \frac{g(b+z) + g(b-z)}{2},\ \ \tilde{g}_2(z) = \frac{g(b+z) - g(b-z)}{2z}. \nonumber
\end{equation}
Then $\tilde{g}_1(x) = g_1(|x|)$, $\tilde{g}_2(x) = g_2(|x|)$ on the real axis $[-c,c]$. According to identity theorem, with chosen branch cut, $\tilde{g}_1(z) = g_1(\sqrt{z^2})$, $\tilde{g}_2(z) = g_2(\sqrt{z^2})$.
\end{proof}

\begin{lemma} \label{lemma_g1}
    Consider the $D_b^{k,\alpha}[-1,1]$ function of the form, 
    \begin{equation}
        f(x) = (x-b)^k g(|x-b|),
    \end{equation}
    with $g(x)$ satisfying Eq. \eqref{g_1_condition}. Let
    \begin{equation}
        f_n(x) = (x-b)^k g(\sqrt{(x-b)^2+\frac{1}{n^4}}), \ \ \ h_n(x) = f_n(x) - f(x).
    \end{equation}
    Then the remainder of $n$-point Gauss quadrature on real interval $[-1,1]$  satisfies,
    \begin{itemize}
        \item[(I)] 
        $\displaystyle |R_n[h_n]| \leq  \begin{cases}
        \displaystyle O(\frac{1}{n^{k+\alpha+2}}), \ \ \ \ \text{ if } k \geq 1,\\
        \displaystyle O(\frac{1}{n^{2\alpha+1}}), \ \ \ \ \ \ \text{ if } k=0.
    \end{cases}$
    \item[(II)] Let $b=\cos\phi$, $\Psi =(2n+1)\phi-\frac{\pi}{2}$. Then 
    \begin{eqnarray}
        R_n[f] &=& \frac{2}{ n^{k+\alpha+1}}\int_{0}^{M\log n}  \Re\left(\frac{ i^{k+1}y^{k} n^{\alpha} \left[ g(i\frac{y}{n})\right] }{\exp\left(\frac{2y}{\sin\phi}+i\Psi\right) + 1}\right) dy \nonumber\\
        &&+\begin{cases}
        O(\frac{1}{n^{2\alpha+2k+1}}), \ \ \ \ \  \text{ if } k+\alpha <1,\\
        O(\frac{\log n}{n^{k+\alpha+2}}), \ \ \ \ \ \ \ \text{ if } k+\alpha =1,\\
        O(\frac{1}{n^{k+\alpha+2}}),\ \ \ \ \ \ \  \text{ if }  k+\alpha >1.
    \end{cases}
    \end{eqnarray}
    Here $\left[g(iy)\right] = g(iy)-g(-iy)$ represents the jump across the branch cut. $M$ is a fixed number that chosen to be sufficiently large.
    \end{itemize}
\end{lemma}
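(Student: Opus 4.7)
The plan is to decompose $R_n[f] = R_n[f_n] - R_n[h_n]$. Part~(I) controls $R_n[h_n]$ by direct pointwise estimation, while for Part~(II) I compute $R_n[f_n]$ via the contour-integral representation~\eqref{Gauss_Error_Contour}. The key feature is that $f_n$ is analytic in a full neighborhood of $[-1,1]$: its only branch points are at $b\pm i/n^2$, with cuts along the vertical rays $(b\pm i/n^2,\,b\pm i\infty)$. This displacement makes the uniform asymptotics of Section~\ref{sec:asymptotic} directly applicable.

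For Part~(I), I would begin with a pointwise estimate obtained from the mean value theorem and the growth bound $|g^{(l)}(r)|\lesssim r^{\alpha-l}$ of~\eqref{g_1_condition}: $|h_n(x)|\lesssim |x-b|^k n^{-2\alpha}$ for $|x-b|\le 1/n^2$, and $|h_n(x)|\lesssim |x-b|^{k+\alpha-2} n^{-4}$ for $|x-b|>1/n^2$. Combining this with $|R_n[h_n]|\le \int_{-1}^{1}|h_n|\,dx + \sum_j w_j|h_n(x_j)|$, and using that Gauss--Legendre nodes have spacing of order $1/n$ with weights $w_j\lesssim 1/n$, a direct bookkeeping of the contributions in the two regimes yields the claimed orders. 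For $k=0$, the narrow spike of $h_n$ of height $\sim n^{-2\alpha}$ and width $\sim n^{-2}$ dominates, producing $O(n^{-2\alpha-1})$; for $k\ge 1$ the spike is suppressed by $(x-b)^k$, and combining the tail estimate with the exactness of Gauss quadrature on polynomials of degree $\le 2n-1$ gives $O(n^{-k-\alpha-2})$.

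For Part~(II), I apply~\eqref{Gauss_Error_Contour} to $f_n$ and deform $C$ outward to the Bernstein ellipse $\Omega_B$ of parameter $1+M\log n/n$, plus wraps around the portions of the two branch cuts of $f_n$ lying inside $\Omega_B$. By Corollary~\ref{Q_P_ratio_bernstein}, the contribution from $\Omega_B$ is $O(n^{-2M})$, negligible once $M$ is chosen large enough. The upper wrap reduces to an integral of the form (up to a constant multiple of $\pi^{-1}$) $\int_{1/n^2}^{y_{\mathrm{top}}}[f_n(b+iy)]\,\frac{Q_n(b+iy)}{P_n(b+iy)}\,dy$, with the jump $[f_n(b+iy)] = (iy)^k\bigl[g(i\sqrt{y^2-1/n^4})-g(-i\sqrt{y^2-1/n^4})\bigr]$. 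Substituting $y=y'/n$, applying Corollary~\ref{Q_P_ratio_corollary} to $Q_n/P_n$, and replacing $[f_n]$ by $[f]$ (valid since $\sqrt{y^2-1/n^4}-y = O(1/(n^4 y))$ for $y\ge 1/n^2$) reproduces the integral displayed in Part~(II). The lower wrap is the complex conjugate of the upper by Schwarz reflection, which yields the factor~$2$ and the~$\Re$.

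The main obstacle will be balancing two singular error sources: the $O\!\bigl(1/(n(\exp(2y'/\sin\phi)-1)^2)\bigr)$ remainder in Corollary~\ref{Q_P_ratio_corollary}, which behaves like $1/(ny'^2)$ as $y'\to 0$, and the discrepancy between $[f_n]$ and $[f]$, which is sizeable only for $y'$ of order~$1$ near the branch endpoint. Both have to be integrated against $y'^k[g(iy'/n)]\sim y'^{k+\alpha}n^{-\alpha}$ and multiplied by the $1/n$ arising from the rescaling. The three cases in the stated remainder arise from integrating essentially $y'^{k+\alpha-1}/n^{k+\alpha+2}$ over $[1/n,M\log n]$: the integrand is integrable at $0$ when $k+\alpha>1$ (giving the clean $O(n^{-k-\alpha-2})$), produces a logarithm at the boundary case $k+\alpha=1$, and must be cut off at $y'=1/n$ when $k+\alpha<1$, yielding $O(n^{-2k-2\alpha-1})$.
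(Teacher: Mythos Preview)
Your Part~(II) plan matches the paper's proof: deform to the Bernstein ellipse $\Omega_B$ plus vertical wraps around the branch cuts of $f_n$, kill the $\Omega_B$ contribution via Corollary~\ref{Q_P_ratio_bernstein}, substitute Corollary~\ref{Q_P_ratio_corollary} on the wraps, and track the three remainders coming from (i) the $O\!\bigl(n^{-1}(\exp(2y/\sin\phi)-1)^{-2}\bigr)$ error in the $Q_n/P_n$ asymptotic, (ii) replacing $g(\pm i\sqrt{y^2-1/n^2}/n)$ by $g(\pm iy/n)$, and (iii) extending the lower limit from $1/n$ to $0$. One bookkeeping slip: after inserting the error from Corollary~\ref{Q_P_ratio_corollary} the near-origin integrand behaves like $y^{k+\alpha-2}$, not $y^{k+\alpha-1}$; this is what actually produces the threshold $k+\alpha=1$ you state.

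Part~(I) has a genuine gap for $k\ge 2$. The crude bound $|R_n[h_n]|\le \int_{-1}^1|h_n|\,dx+\sum_j w_j|h_n(x_j)|$ can never beat $O(n^{-4})$ here: for $|x-b|$ of order~$1$ your own tail estimate gives $|h_n(x)|\asymp |x-b|^{k+\alpha-2}n^{-4}$, so both the integral and the weighted sum are $\gtrsim n^{-4}$, while the target is $O(n^{-k-\alpha-2})\le O(n^{-4-\alpha})$. Invoking ``exactness on polynomials'' does not fix this without a concrete approximation step. The paper's route is to exploit the smoothness of $h_n$: one shows
\[
|h_n^{(k)}(x)|\ \lesssim\ |x-b|^{\alpha-1}\Bigl[\sqrt{(x-b)^2+n^{-4}}-|x-b|\Bigr],
\]
integrates to obtain $V[h_n^{(k-1)}]\lesssim n^{-2-2\alpha}$ (or $n^{-4}\log n$ when $\alpha=1$), and then feeds this uniform bounded-variation bound into Chebyshev-coefficient decay (Theorem~7.1 of Trefethen) together with Xiang's Gauss-quadrature error estimate. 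That machinery is what converts ``$h_n$ is small \emph{and} smooth'' into the extra factor $n^{-k}$ you need; your sketch does not contain an analogue of it. For $k=0,1$ your direct argument is fine and is exactly what the paper does.
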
 
\begin{proof} {\textit Part (I).}   Let $B$ be the maximum of all the $(k+2)$ $\limsup$ values in Eq. \eqref{g_1_condition}, we discuss the cases $k=0, 1$ and $k\geq 2$.

\paragraph{Case 1: $k=0$} Prove by control both the integral and the quadrature of $h_n(x)$.
\begin{eqnarray}
    &&|h_n(x)|=\left|g(\sqrt{(x-b)^2+\frac{1}{n^4}}) - g(|x-b|)\right| \nonumber\\
    &=& \left| \int_{|x-b|}^{\sqrt{(x-b)^2+\frac{1}{n^4}}} g'(s) ds \right|
    \leq \int_{|x-b|}^{\sqrt{(x-b)^2+\frac{1}{n^4}}} \frac{B}{s^{1-\alpha}} ds \\
    &=& \frac{B}{\alpha} \left[\left((x-b)^2+\frac{1}{n^4}\right)^{\alpha/2} - |x-b|^\alpha \right] \nonumber\\
    &=& \frac{B}{\alpha n^{2\alpha}} \left[\left(t^2+1\right)^{\alpha/2} - |t|^\alpha \right] \triangleq \frac{H_1(t)}{n^{2\alpha}}.
\end{eqnarray}
Here $t=\frac{x-b}{n^2}$. Since $0 < \alpha \leq 1$, $H_1(t)$ is an even function that decreases monotonically on $[0,\infty)$. According to Eq.~\eqref{P_real}, the Gauss quadrature nodes can be uniformly approximated by $x_j = \cos(\frac{4j-1}{4n+2}\pi) + O(\frac{1}{n})$, for $j=1,\cdots,n$. Let $j^*$ denote the index of the quadrature node closest to $b$. For $|j-j^*| \leq 1$ and sufficiently large $n$,
\begin{equation}
    |h_n(x_j)| \leq \frac{B}{\alpha n^{2\alpha}}|H_1(0)| = \frac{B}{\alpha n^{2\alpha}}, \ \ \ \omega_j = \frac{2}{(1-x_i^2)[P'_n(x_i)]^2} \leq \frac{\pi}{2n}\sqrt{1-b^2}.
\end{equation}
For $|j-j^*|\geq 2$, the nodes satisfy $|x_j-b| \geq \sqrt{1-b^2}\frac{\pi}{2n}$, so
\begin{equation}
      |h_n(x_j)| \leq \left|h_n(b+\sqrt{1-b^2}\frac{\pi}{2n})\right| \leq
        \frac{B_1}{n^{\alpha+2}}.
\end{equation}
Since $\sum_{j=1}^n \omega_j = 2$ and the weights are all positive,
\begin{equation}
    \sum_{j=1}^n \omega_j |h_n(x_j)| = \sum_{|j-j^*|\leq 1} \omega_j |h_n(x_j)| + \sum_{|j-j^*|\geq 2} \omega_j |h_n(x_j)| \leq \frac{B_2}{n^{2\alpha+1}}. \label{omega_k=0}
\end{equation}
The same argument can be applied to the integral, 
\begin{equation}
    \int_{-1}^1 |h_n(x)| dx = \int_{-1}^{x_{j^*-2}}+\int_{x_{j^*-2}}^{x_{j^*+2}}+\int_{x_{j^*+2}}^{1} |h_n(x)| dx \leq \frac{B_3}{n^{2\alpha+1}}.
\end{equation}
Although there exists better estimate of $\int_{-1}^1 h_n(x) dx$, it won't improve $R_n[h_n]$. 
\begin{equation}
    |R_n[h_n]| \leq \int_{-1}^1 |h_n(x)| dx + \sum_{j=1}^n \omega_j |h_n(x_j)| \leq \frac{B_4}{n^{2\alpha+1}}. \label{R_h_k=0}
\end{equation}
These $B_1$, $B_2$, $B_3$ and $B_4$ are all constants independent of $x$ and $n$.

\paragraph{Case 2: $k=1$} Similarly, prove by control both the integral and the quadrature of $h_n(x)$. Apply mean value theorem then use Eq. \eqref{g_1_condition},
\begin{eqnarray}
    &&|h_n(x)|=|x-b|\left|g(\sqrt{(x-b)^2+\frac{1}{n^4}}) - g(|x-b|)\right| \\
    &=& |x-b| |g'(s(x))| \left[\sqrt{(x-b)^2+\frac{1}{n^4}} - |x-b| \right] \\
    &\leq& B |x-b|^{\alpha}\left[\sqrt{(x-b)^2+\frac{1}{n^4}} - |x-b| \right] \\
    &=& \frac{B}{ n^{2+2\alpha}} |t|^{\alpha}\left[\sqrt{t^2+1} - |t| \right] \label{h_n_k=1}
\end{eqnarray}

If $0<\alpha<1$, there exists a unique maximum of $H_2(t)$ on $[0,\infty)$, at $t^* = \frac{\alpha}{\sqrt{1-\alpha^2}}$. Following a similar argument as in Eq. \eqref{omega_k=0} and \eqref{R_h_k=0}. For $|j-j^*| \leq 1$ and sufficiently large $n$,
\begin{equation}
    h_n(x_j) \leq \frac{B}{\alpha n^{2+2\alpha}}H_1(t^*) = \frac{B'}{\alpha n^{2+2\alpha}}.
\end{equation}
For $|j-j^*|\geq 2$, and sufficiently large $n$,
\begin{equation}
     |x_j-b| \geq \sqrt{1-b^2}\frac{\pi}{2n} > \frac{t^*}{n^2} \Longrightarrow h_n(x_j) \leq 
        \frac{B'_1}{n^{\alpha+3}}.
\end{equation}
So overall,
\begin{equation}
     \sum_{j=1}^n \omega_j |h_n(x_j)| \leq \frac{B'_2}{n^{3+\alpha}}, \ \ \int_{-1}^1 |h_n(x)| dx \leq \frac{B'_3}{n^{3+\alpha}}, \ \ |R_n[h_n]| \leq \frac{B'_4}{n^{\alpha+3}}. \label{k1alpha<1}
\end{equation}

If $\alpha=1$, the function $H_2(t) =t\left[\sqrt{t^2+1} - |t| \right]$ is monotonic increasing.
\begin{equation}
    \sum_{j=1}^n \omega_j |h_n(x_j)| \leq \frac{B''_2}{n^{4}}, \ \ \int_{-1}^1 |h_n(x)| dx \leq \frac{B''_3}{n^{4}},\ \ |R_n[h_n]| \leq \frac{B''_4}{n^{4}}. \label{k1alpha=1}
\end{equation}
All the above $B$s are also constants independent of $x$ and $n$.

\paragraph{Case 3: $k \geq 2$} We can estimate the $k$-th derivative of $h_n(x)$,
\begin{eqnarray}
    |h_n^{(k)}(x)| &=& \left|\sum_{l=0}^k \frac{(k!)^2 (x-b)^{l}}{(l!)^2(k-l)!} \left[g^{(l)}(\sqrt{(x-b)^2+\frac{1}{n^4}})-g^{(l)}(|x-b|)\right]\right| \\
    &\leq& \sum_{l=0}^k \frac{(k!)^2}{(l!)^2(k-l)!} \left[\sqrt{(x-b)^2+\frac{1}{n^4}}-|x-b|\right] \frac{B}{|x-b|^{1-\alpha}}\label{mvt_g1}\\
    &\leq&  B_5 |x-b|^{\alpha-1}\left[\sqrt{(x-b)^2+\frac{1}{n^4}}-|x-b|\right].
    \label{h_n_k_estimate}
\end{eqnarray}
In Eq. \eqref{mvt_g1}, we used mean value theorem and applied Eq. \eqref{g_1_condition}. The total variation of the $(k-1)$-th derivative is bounded by,
\begin{eqnarray}
    &&V[h_n^{(k-1)}] \leq \int_{-1}^1 |h_n^{(k)}(x)| dx \\
    &\leq& B_5\int_{-1}^1 |x-b|^{\alpha-1}\left[\sqrt{(x-b)^2+\frac{1}{n^4}}-|x-b|\right] dx
    \\
    &\leq& \frac{2B_5}{n^{2+2\alpha}}\int_{0}^{2n^2} t^{\alpha-1}\left[\sqrt{t^2+1}-t\right] dt \\
    &\leq& \begin{cases}
        \displaystyle \frac{B_{6}}{n^{2+2\alpha}},\ \ \ 0<\alpha<1,\\
        \displaystyle \frac{B_{7}\log n}{n^{4}},\ \ \ \alpha=1.
    \end{cases} \label{BV_h_n_k}
\end{eqnarray}
This indicates, if $0<\alpha<1$, then $V[n^{2+2\alpha}h_n^{(k-1)}]$ are uniformly bounded for all $n$. According to Theorem 7.1 in \cite{trefethen2019approximation}, the $m$-th Chebyshev coefficients of, $n^{2+2\alpha}h_n(x)$, denoted as $a_{n,m}$, are uniformly bounded by $\frac{C}{m^k}$ for all $n$ and $m$, with $C$ being a constant independent of $n$ and $m$. According to Theorem 2.3 in \cite{xiang2016improved},
\begin{equation}
    \displaystyle |R_n[h_n]| = \frac{|R_n[n^{2+2\alpha}h_n]|}{n^{2+2\alpha}}  \leq \begin{cases}
        O(\frac{\log n}{n^{k+2\alpha+2}}), \ \ \ \ \text{ if } k = 2\\
        O(\frac{1}{n^{k+2\alpha+2}}), \ \ \ \ \text{ if } k > 2.
    \end{cases}  \label{R_n_h_large}
\end{equation}
Similarly, if $\alpha=1$, then $V[\frac{n^{4}}{\log n}h_n^{(k-1)}]$ are uniformly bounded for all $n$,
\begin{equation}
    \displaystyle |R_n[h_n]| = \frac{|R_n[n^{4}h_n /\log n]|}{n^{4}/\log n}  \leq \begin{cases}
        O(\frac{(\log n)^2}{n^{k+4}}), \ \ \ \ \text{ if } k = 2\\
        O(\frac{\log n}{n^{k+4}}), \ \ \ \ \ \ \ \text{ if } k > 2.
    \end{cases}  \label{R_n_h_large1}
\end{equation}
Combining Eq. \eqref{R_n_h_large}, \eqref{R_n_h_large1}, \eqref{R_h_k=0}, \eqref{k1alpha<1}, and \eqref{k1alpha=1}, completes the proof.
\end{proof}
\begin{proof} {\textit Part (II).} For each $f_n(x)$, there are two branch points, $b \pm i\frac{1}{n^2}$. Eq. \eqref{Gauss_Error_Contour} is used to calculate $R_n[f_n]$. The branch cut and the contour $C_n$ are shown in Fig. \ref{contour}, in positive orientation (counterclockwise). $C_{\epsilon_1} =C_{\epsilon_1} \cup C_{\epsilon_2}$ are two circles around the branch points, with radius $\epsilon$. $C_B$ is the Bernstein ellipse $|z + \sqrt{z^2-1}|=1+\frac{M\log n}{n}$. $C_{l1}$, $C_{l2}$, $C_{l3}$ and $C_{l4}$ are vertical straight lines along the branch cut.
\begin{figure}[H]
    \centering
    \includegraphics[width=0.5\linewidth]{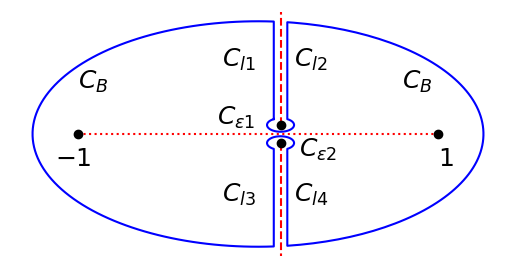}
    \caption{Contour $C_n$ encloses $[-1,1]$ and excludes the branch cut of $f_n(z)$.}
    \label{contour}
\end{figure}
The contribution on $C_\epsilon$ could be made arbitrarily small by letting $\epsilon \to 0$,
\begin{eqnarray}
    &&\lim_{\epsilon \to 0^+} \left|\int_{C_\epsilon} (z-b)^{k} g(\sqrt{(z-b)^2+\frac{1}{n^4}}) \frac{Q_n(z)}{P_n(z)} dz\right| \nonumber \\
    &\leq& \lim_{\epsilon \to 0^+} 2\int_0^{2\pi} \epsilon^{k+1} \left|g(\sqrt{\frac{2i\epsilon}{n^4} + \epsilon^2})\right| \left|\frac{Q_n(z)}{P_n(z)}\right| d\theta = 0. \label{C_epsilon_small}
\end{eqnarray}
In the last step, Eq.~\eqref{g_1_condition} is used. By choosing $M$ sufficiently large, the contribution from $C_B$ becomes a higher-order term, which follows directly from Corollary~\ref{Q_P_ratio_bernstein},
\begin{equation}
    \left|\int_{C_B} (z-b)^{k} g(\sqrt{(z-b)^2+\frac{1}{n^4}}) \frac{Q_n(z)}{P_n(z)} dz\right| \leq O(\frac{1}{n^{2M}}).
\end{equation}
The contributions from $C_{l1}$, $C_{l2}$, $C_{l3}$, and $C_{l4}$ can be parametrized in a similar manner.
\begin{eqnarray}
    &&\int_{C_{l1}} (z-b)^{k} g(\sqrt{(z-b)^2+\frac{1}{n^4}}) \frac{Q_n(z)}{P_n(z)} dz\nonumber\\
    &=& \frac{-i^{k}\pi}{ n^{k+\alpha+1}}\int_{1/n}^{M\log n} y^{k} \frac{  n^\alpha g(-i\frac{\sqrt{y^2-1/n^2}}{n}) }{\exp\left(\frac{2y}{\sin\phi}+i\Psi\right) + 1} dy + R_1\\
    &=& \frac{-i^{k}\pi}{ n^{k+\alpha+1}}\int_{1/n}^{M\log n}  y^{k}\frac{ n^\alpha g(-\frac{iy}{n}) }{\exp\left(\frac{2y}{\sin\phi}+i\Psi\right) + 1} dy + R_1 + R_2\\
    &=& \frac{ -i^{k} \pi}{ n^{k+\alpha+1}}\int_{0}^{M\log n}  y^{k}\frac{ n^\alpha g(-\frac{iy}{n}) }{\exp\left(\frac{2y}{\sin\phi}+i\Psi\right) + 1} dy + R_1 + R_2 +R_3
\end{eqnarray}
Here $R_1$, $R_2$ and $R_3$ are higher order terms, estimated below.
\begin{eqnarray}
    R_1 &\leq & O(\frac{1}{ n^{k+2}})\int_{1/n}^{M\log n}  \left|\frac{ y^{k} g(-i\frac{\sqrt{y^2-1/n^2}}{n}) }{\left[\exp\left(\frac{2y}{\sin\phi}\right)-1\right]^2}  \right| dy \label{R_1_estimate_step1}\\
    &\leq& O(\frac{1}{ n^{k+\alpha+2}})\int_{1/n}^{M\log n}  \frac{ y^{k+\alpha} }{\left[\exp\left(\frac{2y}{\sin\phi}\right)-1\right]^2} dy \label{R_1_estimate_step2}\\
    &\leq& O(\frac{1}{ n^{k+\alpha+2}})\left(\int_{1/n}^{1}  y^{k+\alpha-2} dy +\int_{1}^{M\log n}  \frac{ y^{k+\alpha} }{\left[\exp\left(\frac{2y}{\sin\phi}\right)-1\right]^2} dy\right) \label{R_1_estimate_step3}\\
    &=& \begin{cases}
        O(\frac{1}{n^{2\alpha+2k+1}}), \ \ \ \ \  \text{ if } k+\alpha <1,\\
        O(\frac{\log n}{n^{k+\alpha+2}}), \ \ \ \ \ \ \ \text{ if } k+\alpha =1,\\
        O(\frac{1}{n^{k+\alpha+2}}),\ \ \ \ \ \ \  \text{ if }  k+\alpha >1
    \end{cases} \label{R_1_estimate}
\end{eqnarray}
In Eq. \eqref{R_1_estimate_step1}, use Eq. \eqref{Q_P_ratio}. In Eq. \eqref{R_1_estimate_step2}, use Eq. \eqref{g_1_condition}. In Eq. \eqref{R_1_estimate_step3}, use $\exp(x)-1\geq x$.
\begin{eqnarray}
    R_2& \leq &O(\frac{1}{ n^{k+1}})\int_{1/n}^{M\log n}  \left|y^{k}\frac{  g(-i\frac{\sqrt{y^2-1/n^2}}{n}) - g(-\frac{iy}{n})}{\exp\left(\frac{2y}{\sin\phi}+i\Psi\right) + 1} \right|dy \nonumber\\
    &\leq & O(\frac{1}{ n^{k+\alpha+1}})\int_{1/n}^{\infty}  y^{k}\frac{  y^\alpha - (y^2-1/n^2)^{\alpha/2} }{\exp\left(\frac{2y}{\sin\phi}\right) - 1} dy \label{R_2_estimate_step1}\\
    &\leq& 
        O(\frac{1}{ n^{k+\alpha+3}})\int_{1/n}^{\infty}  \frac{y^{k+\alpha-2}}{\exp\left(\frac{2y}{\sin\phi}\right) - 1} dy,\label{R_2_estimate_step2}\\
    &\leq& \begin{cases}
        O(\frac{1}{n^{2\alpha+2k+1}}), \ \ \ \ \  \text{ if } k+\alpha < 2,\\
        O(\frac{\log n}{n^{5}}), \ \ \ \ \ \ \ \ \ \  \text{ if } k+\alpha = 2,\\
        O(\frac{1}{n^{k+\alpha+3}}),\ \ \ \ \ \ \  \text{ if }  k+\alpha > 2.
    \end{cases}\label{R_2_estimate}
\end{eqnarray}
In Eq. \eqref{R_2_estimate_step1}, use Eq. \eqref{g_1_condition} for the numerator, and $|\exp\left(x+i\theta\right) + 1|\geq |\exp(x)-1|$ for the denominator. In Eq. \eqref{R_2_estimate_step2}, consider the function, $n^2y^2[1-(1-\frac{1}{n^2y^2})^{\alpha/2}]$, which is uniformly bounded on $[\frac{1}{n},\infty)$. In Eq.~\eqref{R_2_estimate}, the dominant contribution arises from the singularity of the integrand near $y=\frac{1}{n}$.
\begin{equation}
    R_3 \leq O(\frac{1}{ n^{k+\alpha+1}})\int_{0}^{1/n} \frac{ y^{k+\alpha} }{\exp\left(\frac{2y}{\sin\phi}\right) - 1} dy \leq O(\frac{1}{ n^{2k+2\alpha+1}}). \label{R_3_estimate}
\end{equation}
In Eq. \eqref{R_3_estimate}, use Eq. \eqref{g_1_condition}, and $e^x-1 \geq x$.

Combing the contributions from $C_{l1}$, $C_{l2}$, $C_{l3}$, $C_{l4}$, $C_\epsilon$ and $C_B$, we obtain the leading order term,
\begin{equation}
    R_n[f_n] \sim \frac{2}{ n^{k+\alpha+1}}\int_{0}^{M \log n} \Re\left(\frac{ i^{k+1}y^{k} n^{\alpha} \left[ g(i\frac{y}{n})\right] }{\exp\left(\frac{2y}{\sin\phi}+i\Psi\right) + 1} \right)dy \leq O(\frac{1}{ n^{k+\alpha+1}}).\label{R_n_f_n_estimate}
    \end{equation}
Eq. \eqref{g_1_condition} implies $\left|n^\alpha g(i\frac{y}{n}) \right| \leq B y^\alpha$, hence the integral in Eq.~\eqref{R_n_f_n_estimate} is bounded. The next order comes from $R_1$, $R_2$, $R_3$ and $R_n[h_n]$.
\end{proof}

Following the same approach, we state another lemma; the proof is provided in the appendix.
\begin{lemma} \label{lemma_g2}
    Consider the $D_b^{k,\alpha}[-1,1]$ function of the form, 
    \begin{equation}
        f(x) = (x-b)^{k+1} g(|x-b|),
    \end{equation}
    with $g(x)$ satisfying Eq. \eqref{g_2_condition}. Let
    \begin{equation}
        f_n(x) = (x-b)^{k+1} g(\sqrt{(x-b)^2+\frac{1}{n^4}}), \ \ \ h_n(x) = f_n(x) - f(x).
    \end{equation}
    Then the remainder term of $n$-point Gauss quadrature on real interval $[-1,1]$  satisfies,
    \begin{itemize}
        \item[(I)] 
        $\displaystyle |R_n[h_n]| \leq  \begin{cases}
        \displaystyle O(\frac{1}{n^{k+\alpha+2}}), \ \ \ \ \text{ if } k \geq 1,\\
        \displaystyle O(\frac{1}{n^{2\alpha+1}}), \ \ \ \ \ \ \text{ if } k=0.
    \end{cases}$
    \item[(II)] Let $b=\cos\phi$, $\Psi =(2n+1)\phi-\frac{\pi}{2}$. Then
    \begin{eqnarray}
            R_n[f] &=& \frac{2}{ n^{k+\alpha+1}}\int_{0}^{M\log n}  \Re\left(\frac{ i^{k+2}y^{k+1} n^{\alpha-1} \left[ g(i\frac{y}{n})\right] }{\exp\left(\frac{2y}{\sin\phi}+i\Psi\right) + 1}\right) dy \nonumber\\
        &&+\begin{cases}
        O(\frac{1}{n^{2\alpha+2k+1}}), \ \ \ \ \  \text{ if } k+\alpha <1,\\
        O(\frac{\log n}{n^{k+\alpha+2}}), \ \ \ \ \ \ \ \text{ if } k+\alpha =1,\\
        O(\frac{1}{n^{k+\alpha+2}}),\ \ \ \ \ \ \  \text{ if }  k+\alpha >1.
    \end{cases}
    \end{eqnarray}
    Here $\left[g(iy)\right] = g(iy)-g(-iy)$ represents the jump across the branch cut. $M$ is a fixed number that chosen to be sufficiently large.
    \end{itemize}
\end{lemma}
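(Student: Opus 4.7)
The plan is to mirror the proof of Lemma~\ref{lemma_g1} almost verbatim, changing only the bookkeeping: the integrand now carries an extra factor $(z-b)$, while the bound on $g$ weakens by one unit (the hypothesis gives $|g^{(l)}(x)| \leq B\,x^{\alpha-1-l}$). These two changes cancel in the leading-order magnitude, which is why the error-rate table in Part~(I) and the power of $n$ in Part~(II) are the same as in Lemma~\ref{lemma_g1}. The only visible change in the leading term is that the prefactor $i^{k+1} y^k n^\alpha$ gets replaced by $i^{k+2} y^{k+1} n^{\alpha-1}$.

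For Part~(I), I would split into the three cases $k=0$, $k=1$, and $k \geq 2$. In the $k=0$ case, write
\[
|h_n(x)| = |x-b|\,\Bigl|g\bigl(\sqrt{(x-b)^2+1/n^4}\bigr) - g(|x-b|)\Bigr|
\leq |x-b| \int_{|x-b|}^{\sqrt{(x-b)^2+1/n^4}} \frac{B}{s^{2-\alpha}}\, ds,
\]
which after the substitution $t = n^2(x-b)$ reduces to $n^{-2\alpha}$ times an even, bounded function of $t$. The same partition of quadrature nodes used in the proof of Lemma~\ref{lemma_g1} (isolating the three nodes nearest $b$ with the help of the Hilb-type approximation for $x_j$) then gives the claimed $O(n^{-2\alpha-1})$ bound. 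For $k=1$ I would differentiate once through the $(x-b)^{k+1}$ factor and apply the mean value theorem directly to the $g$ difference. For $k\geq 2$, differentiate $k$ times, distribute via Leibniz, apply MVT to each $g^{(l)}$ piece, and obtain a bound of the form $|h_n^{(k)}(x)| \leq B'|x-b|^{\alpha-1}[\sqrt{(x-b)^2+1/n^4}-|x-b|]$, identical in shape to \eqref{h_n_k_estimate}. The total-variation argument and the invocations of Theorem~7.1 of \cite{trefethen2019approximation} and Theorem~2.3 of \cite{xiang2016improved} then go through unchanged.

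For Part~(II), I would use the same contour $C_n$ of Fig.~\ref{contour}, with branch points $b \pm i/n^2$. The shrinking arcs $C_\epsilon$ contribute nothing in the limit because the integrand now behaves like $\epsilon^{k+1}\cdot\epsilon^{\alpha-1} = \epsilon^{k+\alpha}$ near each branch point. Corollary~\ref{Q_P_ratio_bernstein} controls $C_B$ by $O(n^{-2M})$. On each of the four vertical segments $C_{l1},\dots,C_{l4}$, I would parametrize by the imaginary height $y$, replace $(z-b)^{k+1}$ by $(\pm iy/n)^{k+1}$, and apply Corollary~\ref{Q_P_ratio_corollary} for $Q_n/P_n$. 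The jumps of $g(\sqrt{(z-b)^2})$ across the cut combine the four contributions into a single integral featuring $[g(iy/n)]$; the extra $y/n$ from $(z-b)^{k+1}$ converts the prefactor $i^{k+1} y^k n^\alpha$ of Lemma~\ref{lemma_g1} into $i^{k+2} y^{k+1} n^{\alpha-1}$, matching the displayed leading term.

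The main obstacle will be controlling the remainder pieces $R_1$, $R_2$, and $R_3$ arising from the three successive approximations (using Corollary~\ref{Q_P_ratio_corollary}, replacing $\sqrt{y^2-1/n^2}$ by $y$, and extending the lower integration limit from $1/n$ to $0$). The stronger singularity of $g$ here (only $s^{\alpha-1}$ integrable) exactly meets the extra $y^{k+1}/n^{k+1}$ coming from $(z-b)^{k+1}$, so each integrand in the $R_j$ bounds should agree termwise with its counterpart in \eqref{R_1_estimate}--\eqref{R_3_estimate}. Verifying this cancellation carefully in each of the three cases $k+\alpha<1$, $k+\alpha=1$, and $k+\alpha>1$ is where the calculation requires attention; once established, the final estimate follows by summing the $C_\epsilon$, $C_B$, and $C_{l1},\dots,C_{l4}$ contributions and adding $R_n[h_n]$ from Part~(I), exactly as at the end of Lemma~\ref{lemma_g1}.
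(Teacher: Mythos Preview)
Your plan is correct and follows the paper's proof essentially verbatim; the paper confirms that Parts~(I) and~(II) reduce to the Lemma~\ref{lemma_g1} computations with the bookkeeping change you describe, and that $R_1$ and $R_3$ are literally identical to \eqref{R_1_estimate} and \eqref{R_3_estimate}. The one place your expectation is slightly too optimistic is $R_2$: here the difference $|g(-i\sqrt{y^2-1/n^2}/n)-g(-iy/n)|$ is controlled by $|y^{\alpha-1}-(y^2-1/n^2)^{(\alpha-1)/2}|$ (or a logarithm when $\alpha=1$), which blows up at $y=1/n$, so the uniform-boundedness trick of \eqref{R_2_estimate_step2} fails and the paper instead splits the integral at $y=2/n$ and treats $[1/n,2/n]$ by brute force; with that adjustment the final $R_2$ bound matches \eqref{R_2_estimate}.
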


\subsection{Proof of Theorem \ref{main_theorem}} For any function $f(x) \in D_b^{k,\alpha}[-1,1]$, Lemma~\ref{f_decomposition} implies that it can be decomposed as
\begin{equation}
    f(x) = P_k(x) + (x-b)^k g_1(|x-b|) + (x-b)^{k+1} g_2(|x-b|).
\end{equation}
There is no quadrature error for the polynomial part $P_k(x)$ when $n \geq k/2$. 
Hence, the leading-order remainder is given by the sum of the leading terms in Lemmas~\ref{lemma_g1} and~\ref{lemma_g2}. 
We also note that the jump of polynomials across the branch cut is zero.
\begin{eqnarray}
    &&R_n[f] = R_n[(x-b)^k g_1(|x-b|)] + R_n[(x-b)^{k+1} g_2(|x-b|)] \\
    &\sim& \frac{2}{ n^{k+\alpha+1}}\int_{0}^{M\log n}  \Re\left(\frac{ i^{k+1}y^{k} n^{\alpha} \left[ g_1(i\frac{y}{n})\right] }{\exp\left(\frac{2y}{\sin\phi}+i\Psi\right) + 1}\right) dy  \nonumber\\
    &&+\frac{2}{ n^{k+\alpha+1}}\int_{0}^{M\log n}  \Re\left(\frac{ i^{k+2}y^{k+1} n^{\alpha-1} \left[ g_2(i\frac{y}{n})\right] }{\exp\left(\frac{2y}{\sin\phi}+i\Psi\right) + 1}\right) dy  \\
    &=&\frac{2}{n}\int_{0}^{M\log n}  \Re \left(\frac{  i\left[ f(b+i\frac{y}{n})\right] }{\exp\left(\frac{2y}{\sin\phi}+i\Psi\right) + 1} \right) dy\\
    &=&\frac{1}{n} \int_{0}^{M\log n} \Re\left( \left[ f(b+i\frac{y}{n})\right] \frac{  i\exp\left(\frac{-2y}{\sin\phi}\right) + i\cos \Psi + \sin \Psi  }{\cosh\left(\frac{2y}{\sin\phi}\right) + \cos \Psi} \right)dy.
\end{eqnarray}
The proof is then complete. Remark \ref{main_order} is a consequence of Eq. \eqref{R_n_f_n_estimate}.

\subsection{Application to Power Singularity\label{power_function}} As an example, consider $-1<b<1$, $\alpha \in (-1,0) \cup (0,1]$, integer $k \geq 0$ and $k + \alpha >0$, the integrand
\begin{equation}
    f(x) = (x-b)^{k} |x-b|^{\alpha}. \label{power_singular_function}
\end{equation}
When $\alpha>0$, then $f(x) \in D^{k,\alpha}_b[-1,1]$, corresponds to Lemma \ref{lemma_g1}. When $\alpha<0$, then $f(x) \in D^{k-1,\alpha+1}_b[-1,1]$, corresponds to Lemma \ref{lemma_g2}. For $y>0$, the jump across the branch cut $(b-i\infty,b+i\infty)$ is given by,
\begin{equation}
    f(b^++i\frac{y}{n}) = i^k \frac{y^{k+\alpha}}{n^{k+\alpha}} e^{i\alpha \pi/2},\ \ \  f(b^-+i\frac{y}{n}) = i^k \frac{y^{k+\alpha}}{n^{k+\alpha}} e^{-i\alpha \pi/2},
\end{equation}
\begin{equation}
    [f(b+i\frac{y}{n})] = 2i^{k+1}\sin \frac{\alpha \pi} {2} \frac{y^{k+\alpha}}{n^{k+\alpha}}.
\end{equation}
The Gauss Quadrature remainder by Eq. \eqref{leading_order_theorem} has leading term,
\begin{eqnarray}
        R_n[f] \sim \frac{\sin(\frac{\alpha\pi}{2})}{ n^{k+\alpha+1}}\int_{0}^{\infty} \Re \left(y^{k+\alpha} i^{k+1}\frac{  i\exp\left(\frac{-2y}{\sin\phi}\right) + i\cos \Psi + \sin \Psi  }{\cosh\left(\frac{2y}{\sin\phi}\right) + \cos \Psi} \right)dy.
    \end{eqnarray}
Notice we have replaced $M\log n$ with $\infty$, because the above integral on $[0,\infty)$ converges with any given $n$. Further simplification requires discussion the parity of $k$.
\paragraph{\bf{Case 1: $k\equiv 0\mp 1\pmod 4$}} Here we write $0$ and $2$ as $1 \mp 1$, allowing the two cases to be treated in a unified manner.
\begin{eqnarray}
        R_n[f] = \frac{\mp \sin(\frac{\alpha\pi}{2})}{ n^{k+\alpha+1}}\int_{0}^{\infty} y^{k+\alpha} \frac{  \exp\left(\frac{-2y}{\sin\phi}\right) + \cos \Psi }{\cosh\left(\frac{2y}{\sin\phi}\right) + \cos \Psi} dy +o(\frac{1}{n^{k+\alpha+1}}).
    \end{eqnarray}
Consider $\Psi = (2n+1)\phi - \frac{\pi}{2}$ and $\cos \Psi \in [-1,1]$,
\begin{equation}
    \limsup_{n \to \infty} \frac{  \exp\left(\frac{-2y}{\sin\phi}\right) + \cos \Psi }{\cosh\left(\frac{2y}{\sin\phi}\right) + \cos \Psi} \leq   \frac{  \exp\left(\frac{-2y}{\sin\phi}\right) +1 }{\cosh\left(\frac{2y}{\sin\phi}\right) +1} = \frac{  2 }{\exp\left(\frac{2y}{\sin\phi}\right) +1}.
\end{equation}
\begin{equation}
    \liminf_{n \to \infty} \frac{  \exp\left(\frac{-2y}{\sin\phi}\right) + \cos \Psi }{\cosh\left(\frac{2y}{\sin\phi}\right) + \cos \Psi} \geq   \frac{  \exp\left(\frac{-2y}{\sin\phi}\right) -1 }{\cosh\left(\frac{2y}{\sin\phi}\right) -1} = \frac{  -2 }{\exp\left(\frac{2y}{\sin\phi}\right) -1}.
\end{equation}
The equality can be attained when $\cos\Psi = \pm 1$ respectively. According to Fatou's lemma and inverse Fatou's lemma,
\begin{eqnarray}
    && \limsup_{n \to \infty} \pm n^{k+\alpha+1}R_n[f] \leq 2 \sin \frac{\alpha \pi}{2}  \int_0^\infty \frac{y^{k+\alpha} }{\exp\left(\frac{2y}{\sin \phi}\right) -1} dy \\
    &=&  \sin\frac{\alpha \pi}{2} (\sin \phi)^{k+\alpha+1}  \frac{\Gamma(k+\alpha+1)}{2^{k+\alpha-1}} \zeta(k+\alpha+1). \label{4l02_u}
\end{eqnarray}
\begin{eqnarray}
    &&\liminf_{n \to \infty} \pm n^{k+\alpha+1}R_n[f] \geq -2 \sin \frac{\alpha \pi}{2}  \int_0^\infty \frac{y^{k+\alpha} }{\exp\left(\frac{2y}{\sin \phi}\right) +1} dy \\
    &=&  -\sin\frac{\alpha \pi}{2} (\sin \phi)^{k+\alpha+1}  \frac{\Gamma(k+\alpha+1)}{2^{k+\alpha-1}}(1-\frac{1}{2^{k+\alpha}}) \zeta(k+\alpha+1). \label{4l02_l}
\end{eqnarray}
Here $\Gamma$ is the gamma function, and $\zeta$ is the Riemann zeta function.
\begin{remark}
    For $k\equiv 0,2\pmod 4$ the limit inferior and limit superior of the leading coefficient have opposite signs. Since the leading coefficient depends continuously on $\cos\Psi$, the intermediate value theorem implies the existence of $\Psi_0$ (equivalently of $\cos\Psi_0$) for which the leading term appearing in Theorem~\ref{main_theorem} is zero. Consequently, the $n$-point Gauss quadrature error decays at a rate strictly faster than $n^{-k-\alpha-1}$. In practice, when applying Gauss Quadrature to functions with singularity of the form \eqref{power_singular_function}, and $k\equiv 0,2\pmod 4$, the number of quadrature points $n$ should be chosen such that $\cos((2n+1)\phi-\frac{\pi}{2}) \approx \cos \Psi_0$, which will be superior comparing with neighboring $n$. Such $\cos \Psi_0$ is implicitly given by,
    \begin{equation}
        \int_{0}^{\infty} x^{k+\alpha} \frac{  e^{-x} + \cos \Psi_0 }{\cosh(x) + \cos \Psi_0} dx = 0.
    \end{equation}
    However, there is no simple expression for $\Psi_0$. Given the fact that, in this case, the limit inferior and limit superior of the leading coefficient are attained when $\cos \Psi =\pm 1$. A crude estimate would be $\cos \Psi_0 =0$, i.e., $\cos((2n+1)\phi) = \pm 1$. \label{good_02}
\end{remark}
\paragraph{\bf{Case 2: $k\equiv \pm 1\pmod 4$}} Similarly,
\begin{equation}
        R_n[f] =\frac{\mp \sin(\frac{\alpha\pi}{2})}{ n^{k+\alpha+1}}  \int_{0}^{\infty}  \frac{  y^{k+\alpha} \sin \Psi }{\cosh\left(\frac{2y}{\sin\phi}\right) + \cos \Psi} dy. \label{pm1_error_integral}
\end{equation}
There exists uniform bound of the integrand,
\begin{equation}
    -\frac{1}{\sinh\left(\frac{2y}{\sin\phi}\right)} \leq \frac{ \sin \Psi }{\cosh\left(\frac{2y}{\sin\phi}\right)+\cos \Psi} \leq \frac{1}{\sinh\left(\frac{2y}{\sin\phi}\right)}.
\end{equation}
Both equal sign are attained when $\cos\Psi = \frac{1}{\cosh\left(\frac{2y}{\sin\phi}\right)}$ with opposite $\sin \Psi$. The limit inferior and limit superior of the leading coefficient can be estimated,
\begin{eqnarray}
    &&\liminf_{n \to \infty} n^{k+\alpha+1}R_n[f] \geq -2 \sin \frac{\alpha \pi}{2}  \int_0^\infty \frac{y^{k+\alpha} }{\sinh\left(\frac{2y}{\sin\phi}\right)} dy \nonumber\\
    &\geq& -\sin\frac{\alpha \pi}{2} (\sin \phi)^{k+\alpha+1}  \frac{\Gamma(k+\alpha+1)}{2^{k+\alpha-1}}(1-\frac{1}{2^{\alpha+1}}) \zeta(k+\alpha+1). \label{4lpm1_l}
\end{eqnarray}
\begin{eqnarray}
    &&\limsup_{n \to \infty} n^{k+\alpha+1}R_n[f] \leq 2 \sin \frac{\alpha \pi}{2}  \int_0^\infty \frac{y^{k+\alpha} }{\sinh\left(\frac{2y}{\sin\phi}\right)} dy \nonumber\\
    &\leq& \sin\frac{\alpha \pi}{2} (\sin \phi)^{k+\alpha+1}  \frac{\Gamma(k+\alpha+1)}{2^{k+\alpha-1}}(1-\frac{1}{2^{\alpha+1}}) \zeta(k+\alpha+1). \label{4lpm1_u}
\end{eqnarray}
Notice, these are strict bounds that cannot be attained. A better estimate could be found by evaluating the integral in Eq. \eqref{pm1_error_integral} as a function of $\Psi$.
\begin{remark}
    In this case of $k\equiv \pm 1\pmod 4$, when $\sin \Psi = \cos((2n+1)\phi) =0$, the leading order term of $R_n[f]$ given by Theorem \ref{main_theorem} becomes 0. In practice, when applying Gauss Quadrature to functions with singularity of the form \eqref{power_singular_function}, and $k\equiv \pm 1\pmod 4$, the number of quadrature points $n$ should be chosen such that $\cos((2n+1)\phi) \approx 0$, which will be superior comparing with neighboring $n$. This behavior is opposite to the case of Remark \ref{good_02}. \label{good_pm1}
\end{remark}

\subsection{Application to Logarithm Singularity}\label{logarithm_function} Another example is the functions with logarithm singularity, at $b = \cos \phi \in (-1,1)$.
\begin{equation}
    f(x) = (x-b)^k |x-b|^\beta\log|x-b|,
\end{equation}
\subsubsection{$0<\beta\leq 1$, $k \geq 0$} In this case, $f(x) \in D_b^{k,\alpha}[-1,1]$, for any $0<\alpha<\beta$. \newline Or $-1<\beta\leq 0$, $k\geq 1$, $f(x) \in D_b^{k-1,\alpha}[-1,1]$, for any $0<\alpha<\beta+1$.

For $y>0$, the jump across the branch cut $(b-i\infty,b+i\infty)$ is given by,
\begin{equation}
    f(b^++i\frac{y}{n}) = (\frac{iy}{n})^k (e^{i\pi}\frac{y^2}{n^2})^{\beta/2} \frac{1}{2} \log (e^{i\pi}\frac{y^2}{n^2}) = i^k e^{i\pi\beta/2} \frac{y^{k+\beta}}{n^{k+\beta}} (\log \frac{y}{n} + \frac{i\pi}{2}),
\end{equation}
\begin{equation}
    f(b^-+i\frac{y}{n}) = (\frac{iy}{n})^k (e^{-i\pi}\frac{y^2}{n^2})^{\beta/2} \frac{1}{2} \log (e^{-i\pi}\frac{y^2}{n^2}) = i^k e^{-i\pi\beta/2} \frac{y^{k+\beta}}{n^{k+\beta}} (\log \frac{y}{n}- \frac{i\pi}{2}),
\end{equation}
\begin{equation}
    [f(b+i\frac{y}{n})] = i^{k+1} \frac{y^{k+\beta}}{n^{k+\beta}} \left[2 \sin\frac{\beta \pi}{2} \log \frac{y}{n} + \pi \sin\frac{(\beta+1) \pi}{2}  \right].
\end{equation}
The Gauss Quadrature remainder by Theorem \ref{main_theorem} has leading term,
\begin{eqnarray}
    R_n[f] &\sim& \frac{1}{ n^{k+\beta+1}}\int_{0}^{\infty} \Re \left(i^{k+1} y^{k+\beta} \left[2 \sin \frac{\beta \pi}{2}\log \frac{y}{n} + \pi \sin \frac{(\beta+1)\pi}{2}\right] \right.\nonumber\\ && \left.\times \frac{  i\exp\left(\frac{-2y}{\sin\phi}\right) + i\cos \Psi + \sin \Psi  }{\cosh\left(\frac{2y}{\sin\phi}\right) + \cos \Psi}\right)dy= O(\frac{\log n}{n^{k+\beta+1}}). \label{betaneq0}
\end{eqnarray}

\subsubsection{$\beta = 0$, $k\geq 1$} In this case, $f(x) \in D_b^{k-1,\alpha}$, for any $0<\alpha<1$.
\begin{equation}
        R_n[f] = \frac{\pi}{ n^{k+1}}\int_{0}^{\infty}\Re\left( y^{k} i^{k+1} \frac{  i\exp\left(\frac{-2y}{\sin\phi}\right) + i\cos \Psi + \sin \Psi  }{\cosh\left(\frac{2y}{\sin\phi}\right) + \cos \Psi} \right)dy = O(\frac{1}{n^{k+1}}). \label{beta =0}
    \end{equation}
Similar discussions about the parity of $k$ and Remark \ref{good_02} and \ref{good_pm1} hold true.

\begin{figure}[htbp]
    \centering
    \includegraphics[width=0.45\linewidth]{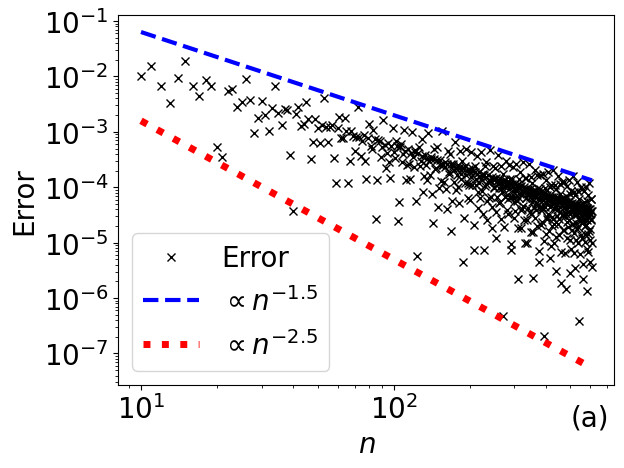}\includegraphics[width=0.45\linewidth]{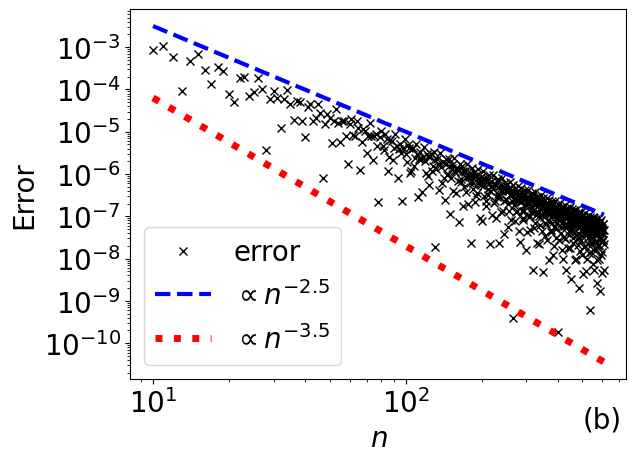}
    \includegraphics[width=0.45\linewidth]{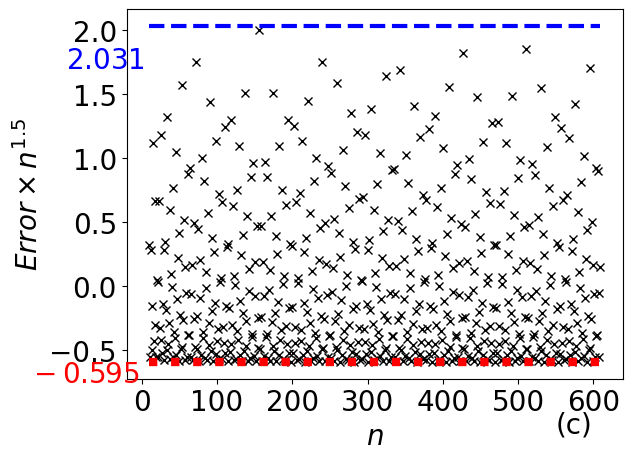}
    \includegraphics[width=0.45\linewidth]{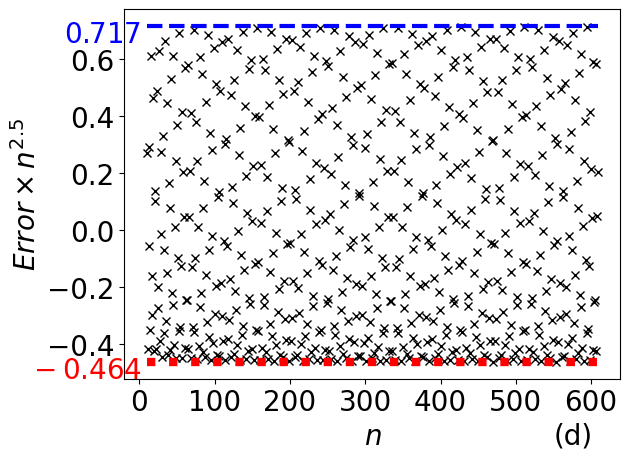}
    \includegraphics[width=0.45\linewidth]{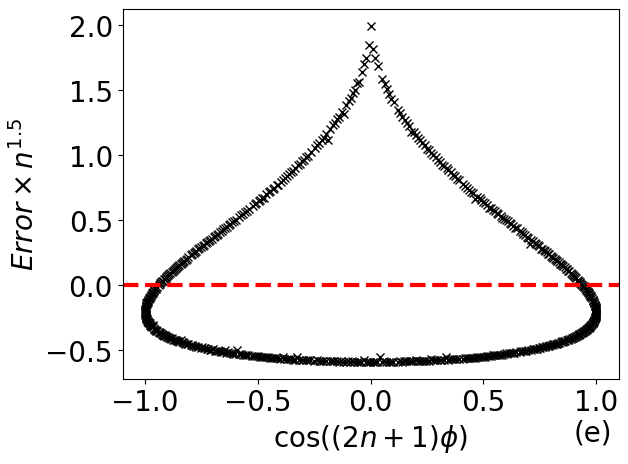}
    \includegraphics[width=0.45\linewidth]{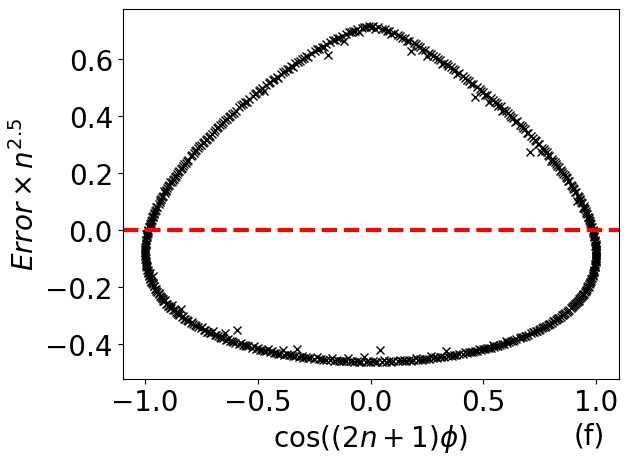}
    \caption{Error in applying $n$-point Gauss Legendre Quadrature to approximate $\int_{-1}^1 |x-0.4|^\alpha dx$, with $n$ being all the integers from $10$ to $600$. Panel (a,c,e): $\alpha = 0.5$; Panel (b,d,f): $\alpha = 1.5$. The leading coefficient in Panel (c,d) are computed through Eq. \eqref{4l02_l} and \eqref{4l02_u}.}
    \label{b_04_k_0}
\end{figure}

\section{Numerical Experiments} \label{sec:numerics}
\subsection{Example 1} In this example, we consider the following power singularity,
\begin{equation}
    f(x) = |x-0.4|^\alpha,
\end{equation}
with $\alpha = 0.5$ or $1.5$. The numerical error analysis is shown in Fig. \ref{b_04_k_0}.

Fig. \ref{b_04_k_0} Panel (a) and (b) plot the absolute error as a function of $n$. All the data points marked by $\times$ are below the dashed line representing the worst case error asymptotes $O(\frac{1}{n^{\alpha+1}})$. However, the error is not monotonic decreasing as a function of $n$. There exist certain values of $n$ such that the error converges faster than the worst case, behaves like $O(\frac{1}{n^{\alpha+2}})$, represented by the dotted line.

In Panel (c) and (d), we consider the leading coefficient by multiplying $n^{k+\alpha+1}$ to the error, without changing its sign. The maximum and minimum leading coefficient are computed through Eq. \eqref{4l02_l} and \eqref{4l02_u}, with the dashed line represents the limit superior and the dotted line represents the limit inferior. 

Panel (e) and (f) illustrate the periodic pattern. Besides the order of convergence being $O(\frac{1}{n^{\alpha+1}})$, the leading coefficient is a function of $\cos((2n+1)\phi)$. When $\cos((2n+1)\phi)=0$, equivalently $\sin((2n+1)\phi)=\pm 1$, the limit superior and limit inferior are attained. There are intersections with the horizontal dashed line, representing the leading coefficient becomes zero, so that the convergence is higher order. This is consistent with the discussion in Remark \ref{good_02}.

\subsection{Example 2} In this example, we consider the following power singularity,
\begin{equation}
    f(x) = (x-0.4)^k |x-0.4|.
\end{equation}
For the case of $k=0$ and $2$, the limit superior and limit inferior of the leading coefficient, calculated from Eq. \eqref{4l02_l} and \eqref{4l02_u}, are exact, as shown in Panel (a) and (c). The maximum and minimum are attained at $\cos((2n+1)\phi)=0$. The leading coefficient becomes close to 0, when $\cos((2n+1)\phi)$ is around $\pm1$, as discussed in Remark \ref{good_02}

For the case of $k=1$ and $3$ in Panel (b) and (d), these extreme values given by Eq. \eqref{4lpm1_l} and \eqref{4lpm1_u} cannot be attained, but they serve as good upper and lower bounds for the leading coefficient. The leading coefficient becomes close to 0, when $\cos((2n+1)\phi) =0$, as discussed in Remark \ref{good_pm1}.
\begin{figure}[htbp]
    \centering
    \includegraphics[width=0.475\linewidth]{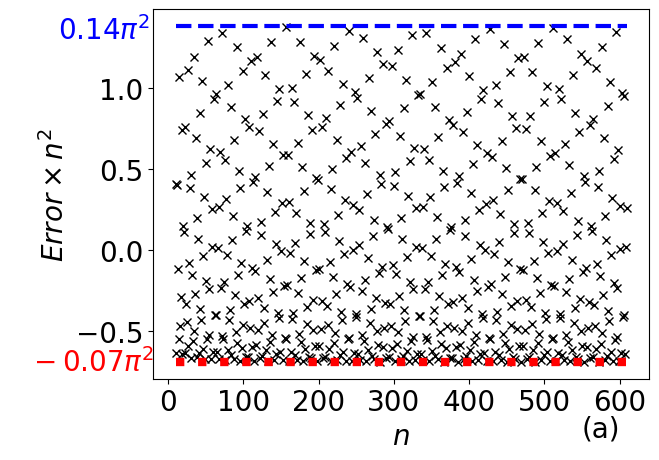}
    \includegraphics[width=0.45\linewidth]{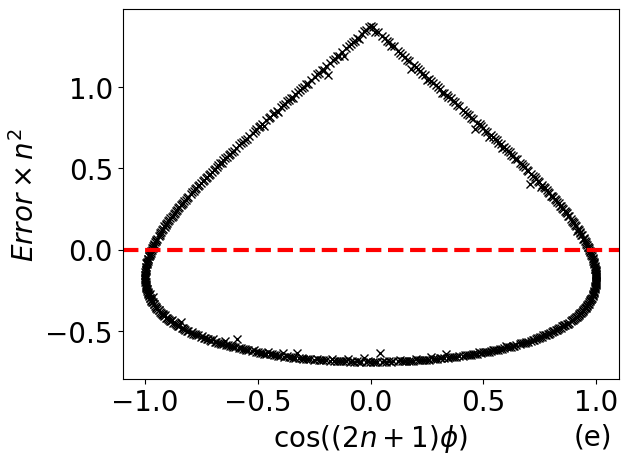}
    \includegraphics[width=0.45\linewidth]{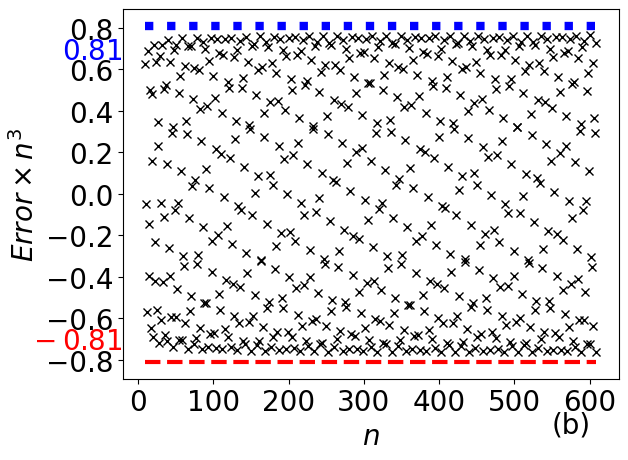}
\includegraphics[width=0.45\linewidth]{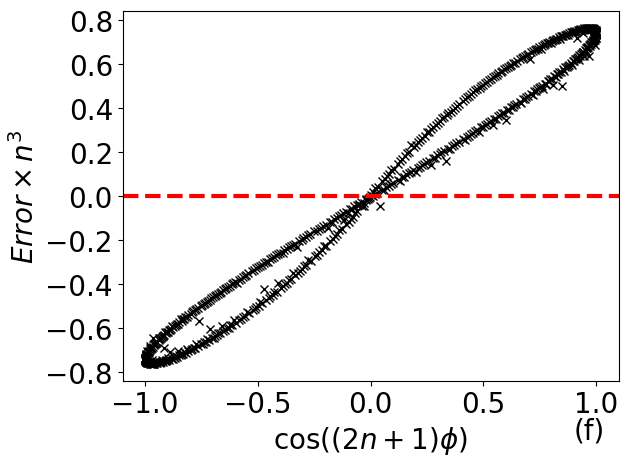}
    \includegraphics[width=0.475\linewidth]{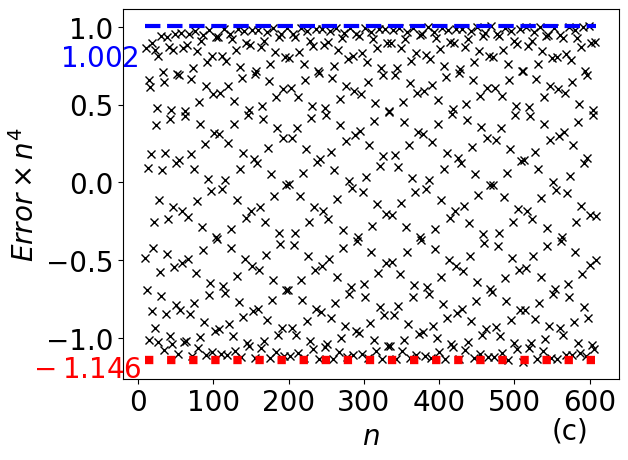}
\includegraphics[width=0.45\linewidth]{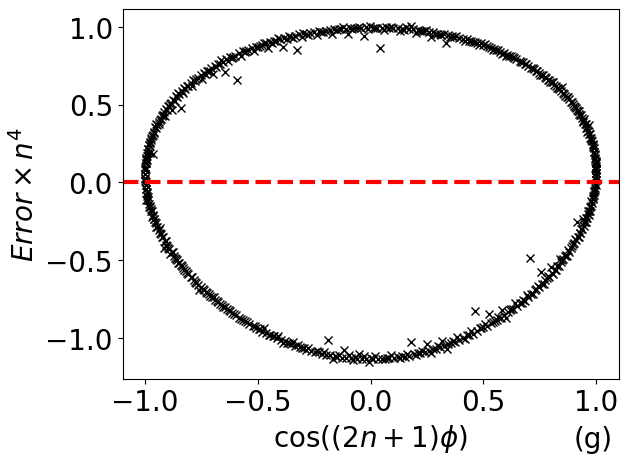}
    \includegraphics[width=0.475\linewidth]{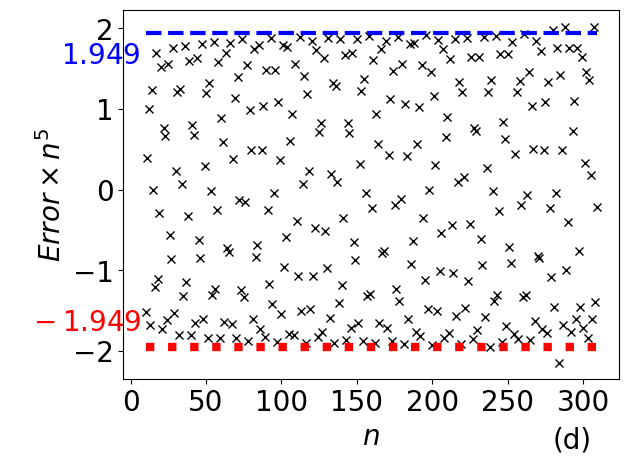}
    \includegraphics[width=0.45\linewidth]{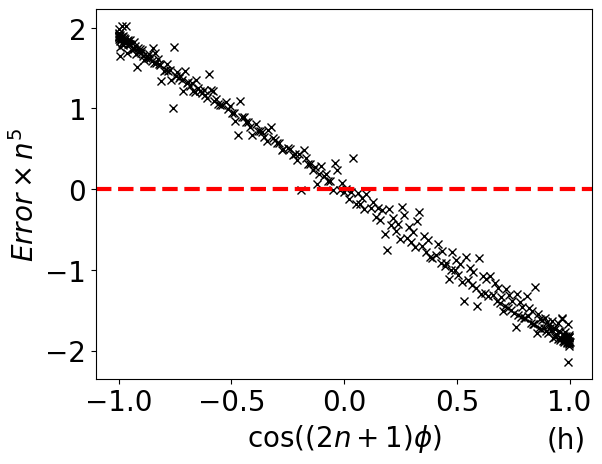}
    \caption{Error in applying $n$-point Gauss Legendre Quadrature to  $\int_{-1}^1 (x-0.4)^k |x-0.4| dx$, with $n$ being all the integers from $10$ to $600$. Panel (a): $k=0$; (b): $k=1$; (c): $k = 2$; (d): $k=3$. For panel (d), the error is at machine accuracy when $n$ is about 300.}
    \label{b_04_a_1}
\end{figure}

\subsection{Example 3} In this example, we consider a different $b=\cos\frac{\pi}{6}$.
\begin{equation}
    f(x) = (x - b)|x-b|^{1/2}.
\end{equation}
In this case, the order of the error is more clear as shown in Fig. \ref{pi_6}. This is due to the fact that, there are only 4 possible values of $\cos((2n+1)\phi)$, when $\phi = \frac{\pi}{6}$. So there are 6 separate curves as shown in Panel (a), which are symmetric in absolute value, and 3 separate straight lines in Panel (b). The 2 straight lines that parallel to $\frac{1}{n^{2.5}}$ are very close, with slight difference in the leading coefficient. The other one, parallel to $\frac{1}{n^{3.5}}$, corresponds to the case $\cos((2n+1)\phi) = \pm 1$.
\begin{figure}[H]
    \centering
    \includegraphics[width=0.45\linewidth]{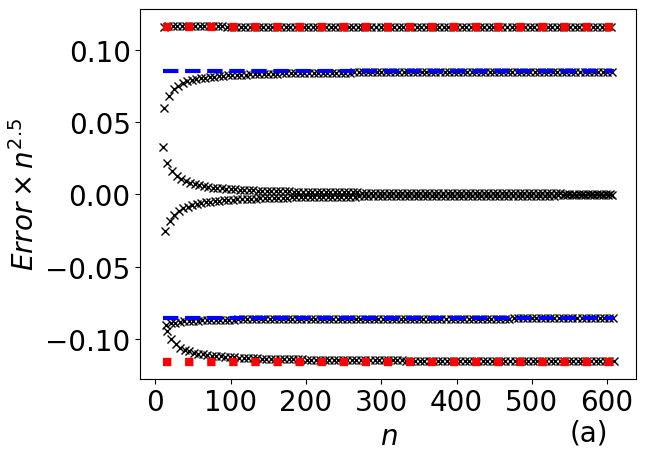}
    \includegraphics[width=0.45\linewidth]{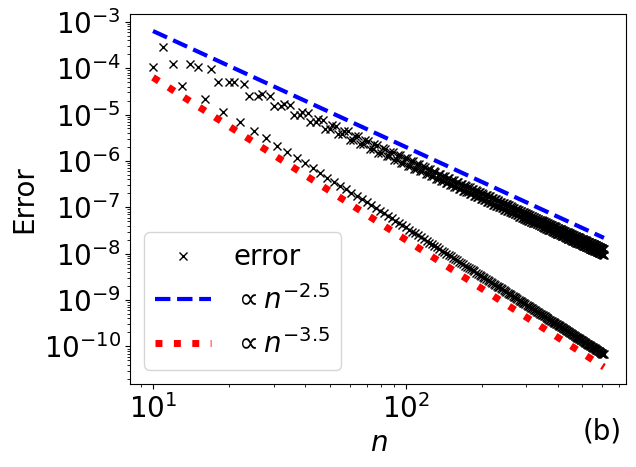}
    \caption{Error in applying $n$-point Gauss Legendre Quadrature to  $\int_{-1}^1 (x-b) |x-b|^{1/2} dx$, with $n$ being all the integers from $10$ to $600$ and $b = \cos(\frac{\pi}{6})$.}
    \label{pi_6}
\end{figure}

\subsection{Example 4} In this example, we consider the following two functions with logarithm singularity,
\begin{equation}
    f_1(x) = |x-0.4| \log|x-0.4|, \ \ f_2(x) = (x-0.4) \log|x-0.4|. \label{log_example}
\end{equation}

$f_1(x)$ corresponds to the case $k=0$, $\beta=1$ in Eq. \eqref{betaneq0}, thus
\begin{equation}
    R_n[f_1] = \frac{2 }{ n^{2}}\int_{0}^{\infty}   \frac{  \exp\left(\frac{-2y}{\sin\phi}\right) + \cos \Psi}{\cosh\left(\frac{2y}{\sin\phi}\right) + \cos \Psi} y\log\frac{n}{y} dy. \label{f_2_error}
\end{equation}
The asymptotic bounds are given by $\cos\Psi = \pm 1$, as the leading order of the integral comes from $y <n$, thus $\log \frac{n}{y}$ can be treated as a positive function.
\begin{equation}
    \limsup_{n\to\infty} n^2 R_n[f_1] \leq 2\int_{0}^{\infty}   \frac{  \exp\left(\frac{-2y}{\sin\phi}\right) + 1}{\cosh\left(\frac{2y}{\sin\phi}\right) + 1} y\log\frac{n}{y} dy \approx 0.691 \log n + 0.162.
\end{equation}
\begin{equation}
    \liminf_{n\to\infty} n^2 R_n[f_1] \geq 2\int_{0}^{\infty}   \frac{  \exp\left(\frac{-2y}{\sin\phi}\right) - 1}{\cosh\left(\frac{2y}{\sin\phi}\right) - 1} y\log\frac{n}{y} dy \approx -1.382 \log n - 1.282.
\end{equation}
The above numbers are evaluated numerically from the integrals. We should notice, these bounds matches exactly as shown in Fig. \ref{log_discontinuous}.

$f_2(x)$ corresponds to the case $k=1$, $\beta=0$ in Eq. \eqref{beta =0}, thus
\begin{equation}
        R_n[f_2] = \frac{-\pi}{ n^{2}}\int_{0}^{\infty}  \frac{ y\sin \Psi  }{\cosh\left(\frac{2y}{\sin\phi}\right) + \cos \Psi} dy. \label{f_1_error}
\end{equation}
The bounds can be estimated using $\cos\Psi = \frac{1}{\cosh\left(\frac{2y}{\sin\phi}\right)}$.
\begin{equation}
    \limsup_{n\to\infty} \left|n^2 R_n[f_2]\right| \leq \pi \int_{0}^{\infty}   \frac{  y}{\sinh\left(\frac{2y}{\sin\phi}\right)} dy \approx 1.628.
\end{equation}
These bounds cannot be attained, as shown in Fig. \ref{log_discontinuous}. It is interesting to notice, although $f_1$ and $f_2$ are both continuous, with logarithmic singular derivative at $x=0.4$, their Gauss quadrature error decays in a different way. 

\begin{figure}[!htbp] \label{log_numeric}
    \centering
    \includegraphics[width=0.45\linewidth]{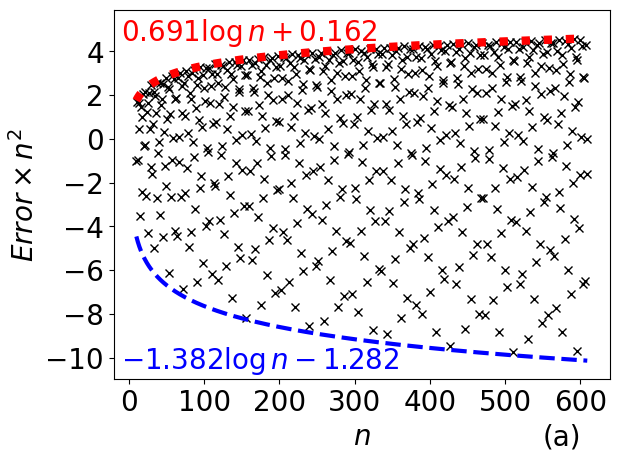}
    \includegraphics[width=0.45\linewidth]{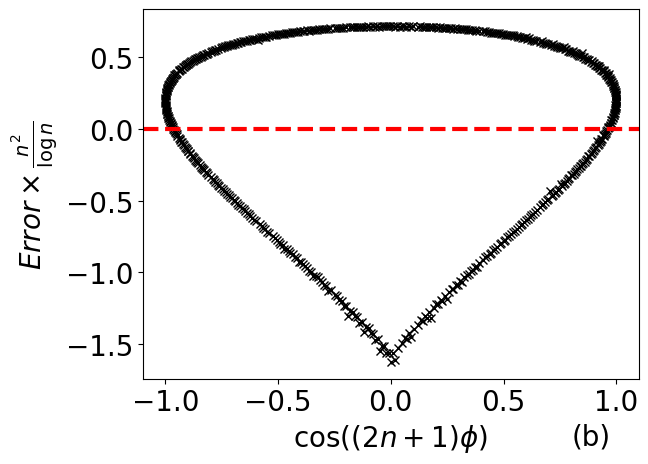}
    \includegraphics[width=0.45\linewidth]{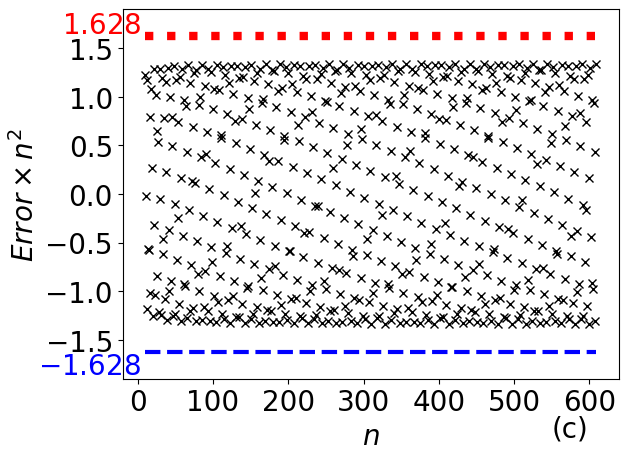}
    \includegraphics[width=0.45\linewidth]{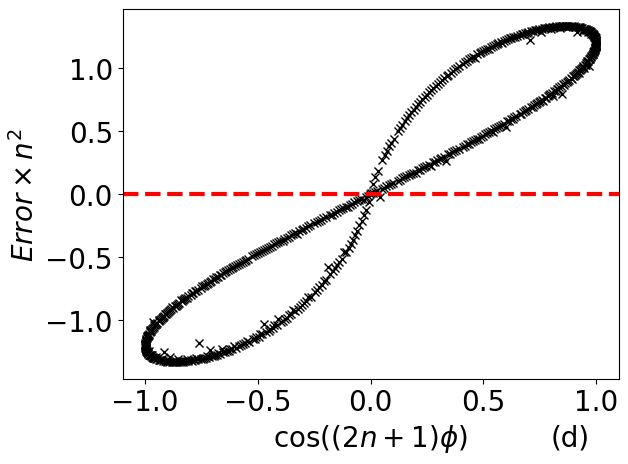}
    
    \caption{Error in applying $n$-point Gauss Legendre Quadrature to functions with logarithm singularity, given by Eq. \eqref{log_example}, with $n$ being all the integers from $10$ to $600$. Panel (a),(b): $f_1$. Panel (c),(d): $f_2$.}
    \label{log_discontinuous}
\end{figure}

\subsection{Example 5}\label{example5} In this example, we consider a more general case.
\begin{equation}
    f(x) = e^{-(x-0.4)^2} |x-0.4|,
\end{equation}
whose derivative is discontinuous at $x=0.4$.
\begin{figure}[!htbp]
    \centering
    \includegraphics[width=0.42\linewidth]{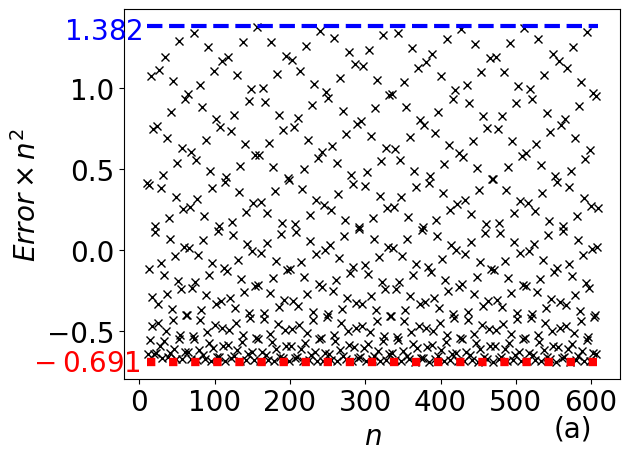}
    \includegraphics[width=0.42\linewidth]{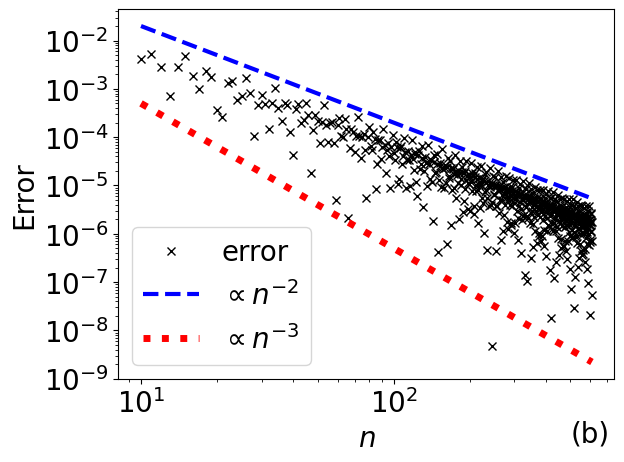}
    \includegraphics[width=0.42\linewidth]{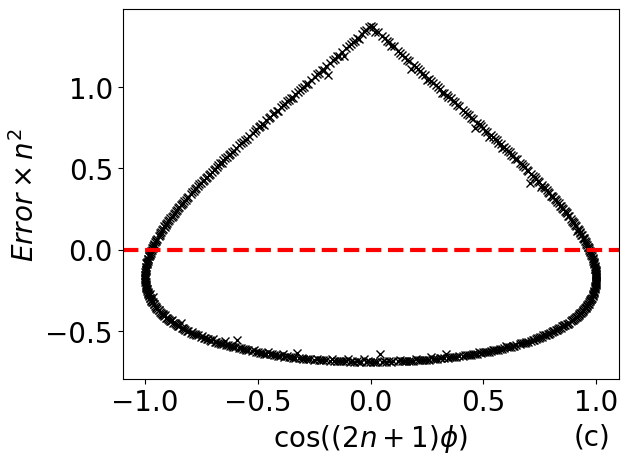}
    \includegraphics[width=0.42\linewidth]{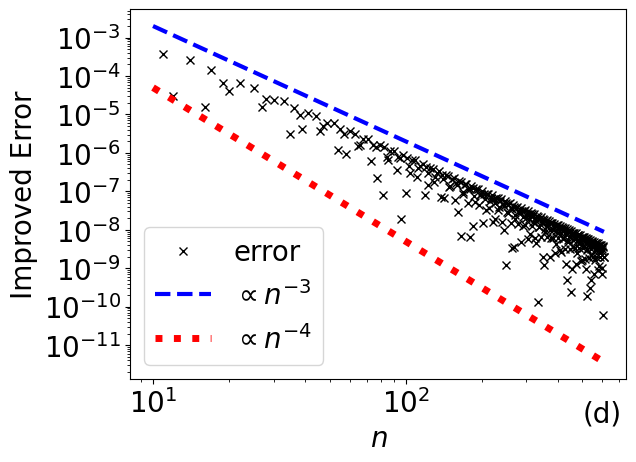}
    
    \caption{Error in applying $n$-point Gauss Legendre Quadrature to $\displaystyle \int_{-1}^1 e^{-(x-0.4)^2} |x-0.4| dx$, with $n$ being all the integers from $10$ to $600$. The bounds in Panel (a) are computed from Eq. \eqref{example_5_error}.}
    \label{general_example}
\end{figure}
Notice the jump 
\begin{equation}
    [f(0.4+\frac{iy}{n})] = 2\frac{iy}{n}\exp\left(\frac{y^2}{n^2}\right).
\end{equation}
The leading order of Gauss quadrature error given by Theorem \ref{main_theorem} reads,
\begin{equation}
    \frac{-2}{ n^2} Re\int_{0}^{M \log n}  y\exp\left(\frac{y^2}{n^2}\right) \frac{  \exp\left(\frac{-2y}{\sin\phi}\right) + \cos \Psi - i \sin \Psi  }{\cosh\left(\frac{2y}{\sin\phi}\right) + \cos \Psi} dy.
\end{equation}
For the leading order contribution, we can use Taylor's expansion, so that
\begin{equation}
    R_n[f] \sim \frac{-2}{ n^2} \int_{0}^{\infty}  y \frac{  \exp\left(\frac{-2y}{\sin\phi}\right) + \cos \Psi }{\cosh\left(\frac{2y}{\sin\phi}\right) + \cos \Psi} dy. \label{example_5_error}
\end{equation}
The bounds of the leading coefficients in Panel (a) are attained when $\cos\Psi = \pm 1$. Panel (d) considers Eq. \eqref{example_5_error} as a correction to the Gauss quadrature, and plots the improved error, $\int_{-1}^1 f(x) dx - \sum_{j=1}^n \omega_j f(x_j) + \frac{2}{ n^2} \int_{0}^{\infty}  y \frac{  \exp\left(\frac{-2y}{\sin\phi}\right) + \cos \Psi }{\cosh\left(\frac{2y}{\sin\phi}\right) + \cos \Psi} dy$.
It clearly shows the worst case convergence order improves from $O(\frac{1}{n^2})$ to $O(\frac{1}{n^3})$.

\section{Appendix}

\subsection{Proof of Lemma 2.1}
\begin{proof}
The Laplace-type integral representations for  $P_n(z)$,
\begin{equation}
    P_n(\cosh(\xi)) = \frac{1}{\pi} \int_0^\pi \exp \left[n \log\left(\cosh(\xi) + \sinh(\xi) \cos \theta \right)\right] d\theta. \label{P_laplace_integral}
\end{equation}
The main contribution comes from $\theta =0$ and $\theta =\pi$, so that one can directly apply Laplace's method. For the contribution near $\theta =0$, introduce 
\begin{equation}
\xi - \frac{\sinh \xi}{\exp \xi} \frac{\nu^2}{2} = \log(\cosh \xi + \sinh \xi \cos\theta ),\ \  \theta = \nu - \left(\frac{\sinh \xi}{8\exp \xi} - \frac{1}{24}\right)\nu^3 + \cdots.
\end{equation} 
So for $|\sinh(\xi)| > \epsilon$, the following asymptotic expansion is uniform,
\begin{eqnarray}
    && \int_0^{\pi/2} \exp \left[n\log(\cosh \xi + \sinh \xi \cos\theta ) \right] d\theta \nonumber\\
    &=& \exp \left(n\xi  \right)\int_0^{\infty} \left[1 + \left(\frac{1}{8} - \frac{3\sinh \xi}{8\exp \xi} \right)\nu^2 + \cdots \right] \exp \left( - n\frac{\sinh \xi}{\exp \xi} \frac{\nu^2}{2} \right) d\nu \nonumber\\
    &=& \exp \left(n\xi  \right) \sqrt{\frac{\pi \exp(\xi)}{2n \sinh \xi}} \left[1 - \frac{3}{8n} + \frac{\exp(\xi)}{8n \sinh(\xi)}\right] \left( 1 + O(\frac{1}{n^2}) \right).\label{P_steepest_0}
\end{eqnarray}
Similarly, the contribution near $\theta = \pi$ is given by,
\begin{eqnarray}
    && \int_{\pi/2}^\pi \exp \left[n\log(\cosh \xi + \sinh \xi \cos\theta ) \right] d\theta \nonumber\\
    &=& \exp \left(-n\xi  \right) \sqrt{\frac{\pi \exp(-\xi)}{-2n \sinh \xi}} \left[1 - \frac{3}{8n} - \frac{\exp(-\xi)}{8n \sinh(\xi)} \right] \left( 1 + O(\frac{1}{n^2}) \right).\label{P_steepest_pi}
\end{eqnarray}
Combining \eqref{P_steepest_0} and \eqref{P_steepest_pi} yields,
\begin{eqnarray}
    P_n(\cosh \xi) =&& \sqrt{\frac{i}{2n\pi \sinh \xi}}\left[\left(1 - \frac{1}{4n} + \frac{\coth(\xi)}{8n}\right)\exp\left((n+\frac{1}{2})\xi-i\frac{\pi}{4}\right) \right. \nonumber\\
    && \left.  + \left(1 - \frac{1}{4n} - \frac{\coth(\xi)}{8n}\right)\exp\left(-(n+\frac{1}{2})\xi+i\frac{\pi}{4}\right) + O(\frac{1}{n^2}) \right].
\end{eqnarray}
It is important to note the leading-order term could cancel to an $O(n^{-1})$ term when $\Re(\xi) = O(n^{-2})$, and $\cos\left((n+\frac{1}{2})\Im(\xi)-\frac{\pi}{4}\right)=0$. To guarantee the ratio of the higher-order term to the leading-order term remains uniformly bounded by $O(n^{-1})$, one requires $\bigl|\cosh\!\bigl((n+\tfrac{1}{2})\xi\bigr)\bigr| \;\geq\; \frac{L}{n}$.
\end{proof}

\subsection{Proof of Lemma 2.3}
\begin{proof}
The Laplace-type integral representations for $Q_n(z)$,
\begin{equation}
    Q_n(\cosh(\xi)) =  \int_0^\infty \exp \left[-(n+1) \log\left(\cosh(\xi) + \sinh(\xi) \cosh \theta \right)\right] d\theta. \label{Q_laplace_integral}
\end{equation}
The main contribution comes from the end point $\theta =0$. 
\begin{eqnarray}
    && \int_0^\infty \exp \left[-(n+1) \log\left(\cosh(\xi) + \sinh(\xi) \cosh \theta \right)\right] d\theta \nonumber\\
    &=& \exp \left(-(n+1)\xi  \right)\int_0^{\infty} \exp \left( - (n+1)\frac{\sinh \xi}{\exp \xi} \frac{\theta^2}{2} \right) d\theta \left( 1 + O(\frac{1}{n}) \right) \nonumber\\
    &=& \exp \left(-(n+1)\xi  \right) \sqrt{\frac{\pi \exp(\xi)}{2(n+1) \sinh \xi}}  \left( 1 + O(\frac{1}{n}) \right).\label{Q_steepest_0}
\end{eqnarray}
\end{proof}

\subsection{Proof of Corollary 2.4}
\begin{proof} 
On such Bernstein ellipse, $|\exp(\xi)| = 1 + \frac{M \log n}{n}$, so
    \begin{equation}
        |\cosh(\xi)| \geq \frac{M\log n}{n}, |\sinh(\xi)| \geq \frac{M\log n}{n}.
    \end{equation}
Following the same arguments as in Lemma 2.1 and Lemma 2.3, 
\begin{equation}
    P_n(\cosh \xi) = \sqrt{\frac{2i}{n\pi \sinh \xi}}\cosh\left((n+\frac{1}{2})\xi-i\frac{\pi}{4}\right) \left( 1 + O(\frac{1}{\log n}) \right),
\end{equation}
\begin{equation}
    Q_n(\cosh(\xi)) = \sqrt{\frac{\pi}{2(n+1)i \sinh \xi}}\exp\left(-(n+\frac{1}{2})\xi+i\frac{\pi}{4}\right)  \left( 1 + O(\frac{1}{\log n}) \right),
\end{equation}
are uniform on the Bernstein ellipse $\Omega_B$. So,
\begin{equation}
        \limsup_{n \to \infty} \sup_{z \in \Omega_B} \left| n^{2M}\frac{Q_n(\cosh(\xi))}{P_n(\cosh(\xi))} \right| \leq \lim_{n \to \infty} \frac{n^{2M}}{|1 +\frac{M \log n}{n}|^{2n+1}-1} = 1.
    \end{equation}
\end{proof}

\subsection{Proof of Corollary 2.5}
\begin{proof}
    For $\cosh \xi = \cos \phi +\frac{i}{n}y$, and $ y \in \left[\frac{1}{n}, M \log n\right]$,
    \begin{equation}
        \xi = i\phi + \frac{y}{n\sin \phi} + O(\frac{y^2}{n^2})
    \end{equation}
    Using the asymptotic approximation in Lemma 2.1 and Lemma 2.3,
    \begin{equation}
    \frac{Q_n(b +\frac{i}{n}y)}{P_n(b +\frac{i}{n}y)} = \frac{i\pi +O(\frac{1}{n})}{\left(1 - \frac{1}{4n} - \frac{i\cot\phi}{8n}\right)\exp\left(\frac{2y}{\sin\phi}+i\Psi\right) + \left(1 - \frac{1}{4n} + \frac{i\cot\phi}{8n}\right) }
\end{equation}
Here $\Psi = (2n+1)\phi-\frac{\pi}{2}$. Notice, for $ y \geq 0$, 
\begin{eqnarray}
    &&\left|\left(1 - \frac{1}{4n} - \frac{i\cot\phi}{8n}\right)\exp\left(\frac{2y}{\sin\phi}+i\Psi\right) + \left(1 - \frac{1}{4n} + \frac{i\cot\phi}{8n}\right)\right| \nonumber\\
    &\geq & \sqrt{(1 - \frac{1}{4n})^2 + (\frac{\cot\phi}{8n})^2} \left[\exp\left(\frac{2y}{\sin\phi}\right)-1\right]. 
\end{eqnarray}
So, uniformly in $ y \in \left[\frac{1}{n}, M \log n\right]$, we have
\begin{equation}
    \left|\frac{Q_n(b +\frac{i}{n}y)}{P_n(b +\frac{i}{n}y)} - \frac{i\pi }{\exp\left(\frac{2y}{\sin\phi}+i\Psi\right) + 1 } \right| \leq O\left(\frac{1}{n \left(\exp\left(\frac{2y}{\sin\phi}\right)-1\right)^2}\right)
\end{equation}
\end{proof}

\subsection{Proof of Lemma 3.3}

\begin{proof} {\textit Part (I).} We follow the same steps as in the proof of Lemma 3.2.

\paragraph{Case 1: $k=0$} Prove by control both the integral and the quadrature of $h_n(x)$. 

If $0<\alpha<1$,
\begin{eqnarray}
    |h_n(x)|&=&|x-b|\left|g(\sqrt{(x-b)^2+\frac{1}{n^4}}) - g(|x-b|)\right|
    \nonumber \\ 
    &=&  |x-b| \left|\int_{|x-b|}^{\sqrt{(x-b)^2+\frac{1}{n^4}}} g'(s) ds \right| \nonumber \\ 
    &\leq& B |x-b| \left||x-b|^{\alpha-1} - \left((x-b)^2+\frac{1}{n^4}\right)^{(\alpha-1)/2} \right|\\ 
    &\leq& \frac{B}{n^{2\alpha}} \left[|t|^{\alpha} - |t|\left(t^2+1\right)^{(\alpha-1)/2} \right]
\end{eqnarray}
As $t \to \infty$, $H(t) =|t|^{\alpha} - |t|\left(t^2+1\right)^{(\alpha-1)/2} \to 0$, so the maximum of $H(t)$ is attained at finite $t^*$, and there is only one local maximum on $[0,\infty)$,
\begin{equation}
    \sum_{j=1}^n \omega_j |h_n(x_j)|  \leq \frac{B_2}{n^{2\alpha+1}}, \ \ \int_{-1}^1 |h_n(x)| dx \leq \frac{B_3}{n^{2\alpha+1}}, \ \ R_n[h_n] \leq \frac{B_4}{n^{2\alpha+1}}
\end{equation}

If $\alpha=1$,
\begin{eqnarray}
    &&|h_n(x)|=|x-b|\left|g(\sqrt{(x-b)^2+\frac{1}{n^4}}) - g(|x-b|)\right| \nonumber \\ 
    &\leq& B |x-b| \log\left(1+\frac{1}{n^4(x-b)^2}\right) \\
    &\leq& \frac{B}{n^{2}} |t| \log(1+\frac{1}{t^2}).
\end{eqnarray}
As $t \to \infty$, $H(t) =|t| \log(1+\frac{1}{t^2}) \to 0$, so the maximum of $H(t)$ is attained at finite $t^*$, and there is only one local maximum on $[0,\infty)$,
\begin{equation}
    \sum_{j=1}^n \omega_j |h_n(x_j)|  \leq \frac{B_2' }{n^{3}}, \ \ \int_{-1}^1 |h_n(x)| dx \leq \frac{B_3' }{n^{3}}, \ \ R_n[h_n] \leq \frac{B_4'}{n^{3}}.
\end{equation}

\paragraph{Case 1: $k=1$} Prove by control both the integral and the quadrature of $h_n(x)$.  Use mean value theorem and property of $g$,
\begin{eqnarray}
    |h_n(x)|&=&|x-b|^{2}\left|g(\sqrt{(x-b)^2+\frac{1}{n^4}}) - g(|x-b|)\right| \nonumber \\ 
    &\leq& B |x-b|^{\alpha}\left|\sqrt{(x-b)^2+\frac{1}{n^4}} - |x-b|\right|
\end{eqnarray}
This becomes the same as Eq. (3.25). So, Eq. (3.26)--(3.30) hold.

\paragraph{Case 3: $k\geq 2$} We can again estimate the derivatives.
\begin{eqnarray}
    &&|h_n^{(k)}(x)| \nonumber\\
    &=& \left|\sum_{l=0}^k \frac{k! (k+1)! (x-b)^{l+1}}{l! (l+1)! (k-l)!} \left[g^{(l)}(\sqrt{(x-b)^2+\frac{1}{n^4}})-g^{(l)}(|x-b|)\right]\right| \\
    &\leq& \sum_{l=0}^k \frac{k! (k+1)! }{l! (l+1)! (k-l)!} \left[\sqrt{(x-b)^2+\frac{1}{n^4}}-|x-b|\right] \frac{B}{|x-b|^{1-\alpha}}\label{mvt_g2}\\
    &\leq&  B_5' |x-b|^{\alpha-1}\left[\sqrt{(x-b)^2+\frac{1}{n^4}}-|x-b|\right]. \label{h_n_k_estimate_2}
\end{eqnarray}
This is the same as Eq. (3.32), so Eq. (3.33)--(3.38) are all valid, with slightly different constants.
\end{proof}
\begin{proof}
    {\textit Part (II).} It is sufficient to revisit the contribution from $C_{l1}$.
    \begin{eqnarray}
    &&\int_{C_{l1}} (z-b)^{k+1} g(\sqrt{(z-b)^2+\frac{1}{n^4}}) \frac{Q_n(z)}{P_n(z)} dz\nonumber\\
    &=& \frac{-i^{k+1}\pi}{ n^{k+\alpha+1}}\int_{1/n}^{M\log n} y^{k+1} \frac{  n^{\alpha-1} g(-i\frac{\sqrt{y^2-1/n^2}}{n}) }{\exp\left(\frac{2y}{\sin\phi}+i\Psi\right) + 1} dy + R_1\\
    &=& \frac{-i^{k+1}\pi}{ n^{k+\alpha+1}}\int_{1/n}^{M\log n}  y^{k+1}\frac{ n^{\alpha-1} g(-\frac{iy}{n}) }{\exp\left(\frac{2y}{\sin\phi}+i\Psi\right) + 1} dy + R_1 + R_2\\
    &=& \frac{ -i^{k+1} \pi}{ n^{k+\alpha+1}}\int_{0}^{M\log n}  y^{k+1}\frac{ n^{\alpha-1} g(-\frac{iy}{n}) }{\exp\left(\frac{2y}{\sin\phi}+i\Psi\right) + 1} dy + R_1 + R_2 +R_3.
\end{eqnarray}
Here
\begin{eqnarray}
    R_1 &\leq & O(\frac{1}{ n^{k+3}})\int_{1/n}^{M\log n}  \left|\frac{ y^{k+1} g(-i\frac{\sqrt{y^2-1/n^2}}{n}) }{\sinh^2\left(\frac{2y}{\sin\phi}\right)}  \right| dy \\
    &\leq& O(\frac{1}{ n^{k+\alpha+2}})\int_{1/n}^{M\log n}  \frac{ y^{k+\alpha} }{\sinh^2\left(\frac{2y}{\sin\phi}\right)} dy.
\end{eqnarray}
\begin{equation}
    R_3 \leq O(\frac{1}{ n^{k+\alpha+1}})\int_{0}^{1/n} \frac{ y^{k+\alpha} }{\exp\left(\frac{2y}{\sin\phi}\right) - 1} dy.
\end{equation}
So $R_1$and $R_3$ will be the same as Eq. (3.47)and (3.51).

There is slight difference in $R_2$.

If $0<\alpha<1$, since $z^{2-\alpha} g'(z)$ is bounded on the imaginary axis in a neighborhood of the origin, 
\begin{eqnarray}
    R_2& \leq &O(\frac{1}{ n^{k+2}})\int_{1/n}^{M\log n}  \left|y^{k+1}\frac{  g(-i\frac{\sqrt{y^2-1/n^2}}{n}) - g(-\frac{iy}{n})}{\exp\left(\frac{2y}{\sin\phi}+i\Psi\right) + 1} \right|dy \\
    &\leq & O(\frac{1}{ n^{k+\alpha+1}})\int_{1/n}^{\infty}  y^{k+1}\frac{  y^{\alpha-1} - (y^2-1/n^2)^{(\alpha-1)/2} }{\exp\left(\frac{2y}{\sin\phi}\right) - 1} dy.
\end{eqnarray}

We need to split the integral into two parts,
\begin{eqnarray}
    &&\int_{1/n}^{2/n}  y^{k+1}\frac{  y^{\alpha-1} - (y^2-1/n^2)^{(\alpha-1)/2} }{\exp\left(\frac{2y}{\sin\phi}\right) - 1} dy \nonumber\\
    &\leq& \frac{\sin \phi}{2} \int_{1/n}^{2/n}  y^{k}\left[  y^{\alpha-1} - (y^2-1/n^2)^{(\alpha-1)/2}\right]  dy\\
    &=& \frac{\sin \phi}{2 n^{k+\alpha}} \int_{1}^{2}  t^{k}\left[  t^{\alpha-1} - (t^2-1)^{(\alpha-1)/2}\right]  dt\\
    &=& O(\frac{1}{n^{k+\alpha}}).
\end{eqnarray}
Here we used $\exp(x)-1\geq x$.

\begin{eqnarray}
    &&\int_{2/n}^{\infty}  y^{k+1}\frac{  y^{\alpha-1} - (y^2-1/n^2)^{(\alpha-1)/2} }{\exp\left(\frac{2y}{\sin\phi}\right) - 1} dy \nonumber \\
    &=& \frac{1}{n^2} \int_{2/n}^{\infty}  y^{k+\alpha-2}\frac{ n^2 y^2\left[ 1 - (1-\frac{1}{n^2y^2})^{(\alpha-1)/2} \right]}{\exp\left(\frac{2y}{\sin\phi}\right) - 1}  dy\\
    &\leq & O(\frac{1}{n^{2}}) \int_{2/n}^{\infty} \frac{ y^{k+\alpha-2} }{\exp\left(\frac{2y}{\sin\phi}\right) - 1}  dy\\
    &\leq& \begin{cases}
        O(\frac{1}{n^{k+\alpha}}), \ \ \ \ \  \text{ if } k+\alpha < 2,\\
        O(\frac{1}{n^{2}}),\ \ \ \ \ \ \  \text{ if }  k+\alpha > 2.
    \end{cases}
\end{eqnarray}
Here we used $n^2 y^2\left[ 1 - (1-\frac{1}{n^2y^2})^{(\alpha-1)/2} \right]$ is uniformly bounded on $y \in [2/n,\infty)$.

Add together, for $0<\alpha<1$,
\begin{equation}
    R_2 \leq \begin{cases}
        O(\frac{1}{n^{2k+2\alpha+1}}), \ \ \ \ \  \text{ if } k+\alpha < 2,\\
        O(\frac{1}{n^{k+\alpha+2}}),\ \ \ \ \ \ \  \text{ if }  k+\alpha > 2.
    \end{cases}
\end{equation}

If $\alpha=1$, since $z g'(z)$ is bounded on the imaginary axis in a neighborhood of the origin,
\begin{eqnarray}
    R_2& \leq &O(\frac{1}{ n^{k+2}})\int_{1/n}^{M\log n}  \left|y^{k+1}\frac{  g(-i\frac{\sqrt{y^2-1/n^2}}{n}) - g(-\frac{iy}{n})}{\exp\left(\frac{2y}{\sin\phi}+i\Psi\right) + 1} \right|dy \\
    &\leq & O(\frac{1}{ n^{k+2}})\int_{1/n}^{\infty}  y^{k+1}\frac{  \log y - \log(y^2-1/n^2) }{\exp\left(\frac{2y}{\sin\phi}\right) - 1} dy.
\end{eqnarray}
Similarly, split into two parts,
\begin{eqnarray}
    &&\int_{1/n}^{2/n}  y^{k+1}\frac{  \log y - \log(y^2-1/n^2) }{\exp\left(\frac{2y}{\sin\phi}\right) - 1} dy \nonumber \\
    &\leq & O(\frac{1}{ n^{k+1}})\int_{1}^{2}  t^{k} \left[\log t - \log(t^2-1) \right] dt\\
    &=& O(\frac{1}{ n^{k+1}}).
\end{eqnarray}
\begin{eqnarray}
    &&\int_{2/n}^{\infty}  y^{k+1}\frac{  \log y - \log(y^2-1/n^2) }{\exp\left(\frac{2y}{\sin\phi}\right) - 1} dy \nonumber \\
    &= & \frac{1}{ n^{2}}\int_{2/n}^{\infty}  y^{k-1}\frac{  -n^2 y^2 \log (1-\frac{1}{n^2y^2}) }{\exp\left(\frac{2y}{\sin\phi}\right) - 1} dy \\
    &\leq & O(\frac{1}{ n^{2}})\int_{2/n}^{\infty}  \frac{  y^{k-1}}{\exp\left(\frac{2y}{\sin\phi}\right) - 1} dy\\
    &\leq& \begin{cases}
        O(\frac{\log n}{n^{2}}), \ \ \ \ \  \text{ if } k=1,\\
        O(\frac{1}{n^{2}}),\ \ \ \ \ \ \  \text{ if }  k>1.
    \end{cases}
\end{eqnarray}

Overall, we obtain the same estimate as Eq. (3.50),
\begin{equation}
    R_2 \leq \begin{cases}
        O(\frac{1}{n^{2k+2\alpha+1}}), \ \ \ \ \  \text{ if } k+\alpha < 2,\\
        O(\frac{\log n}{n^{5}}), \ \ \ \ \ \ \ \ \ \  \text{ if } k+\alpha = 2,\\
        O(\frac{1}{n^{k+\alpha+2}}),\ \ \ \ \ \ \  \text{ if }  k+\alpha > 2.
    \end{cases}
\end{equation}
\end{proof}

\bibliographystyle{plain}
\bibliography{SIAM_NA/references}

\end{document}